\newcommand{\comments}[1]{}
\newcommand\T{\rule{0pt}{2.6ex}}
\newcommand\B{\rule[-1.2ex]{0pt}{0pt}}
\newcommand{\C}{\mathbb{C}}
\newcommand{\Z}{\mathbb{Z}}
\newcommand{\sN}{\mathscr{N}}
\newcommand{\sM}{\mathscr{M}}
\newcommand{\sO}{\mathscr{O}}
\newcommand{\sA}{\mathscr{A}}
\newcommand{\sNt}{\widetilde{\mathscr{N}}}
\newcommand{\sMt}{\widetilde{\mathscr{M}}}
\newcommand{\Bx}{\mathscr{B}_x}
\newcommand{\Ox}{\mathscr{O}_x}
\newcommand{\Mx}{\widetilde{\mathscr{M}}_x}
\newcommand{\w}{\sigma}
\newcommand{\W}{\mathscr{W}}
\newcommand{\Qlb}{\overline{\mathbb{Q}}_{\ell}}
\newcommand{\hB}{B}
\newcommand{\kg}{\mathfrak{g}}
\newcommand{\ku}{\mathfrak{u}}
\newcommand{\ksl}{\mathfrak{sl}}
\newcommand{\row}{\mathrm{row}}
\newcommand{\col}{\mathrm{col}}
\newcommand{\height}{\mathrm{ht}}
\newcommand{\kl}{\mathfrak{l}}
\newcommand{\IC}{\mathrm{IC}}
\DeclareMathOperator{\Spec}{Spec}
\DeclareMathOperator{\multi}{mult}
\DeclareMathOperator{\Hom}{Hom}
\DeclareMathOperator{\Lie}{Lie}
\DeclareMathOperator{\Ad}{Ad}
\newtheorem{thm}{Theorem}[section]
\newtheorem{cor}[thm]{Corollary}
\newtheorem{lem}[thm]{Lemma}
\newtheorem{prop}[thm]{Proposition}
\newtheorem{fact}[thm]{Fact}
\theoremstyle{definition}
\newtheorem{defn}[thm]{Definition}
\newtheorem{example}[thm]{Example}
\newenvironment{remark}[1][Remark.]{\begin{trivlist}
\item[\hskip \labelsep \textit{#1}]}{\end{trivlist}}
\begin{document}

\title{Graham's Variety and Perverse Sheaves on the Nilpotent Cone}
\author{Amber Russell}
\date{\today}
\maketitle

\begin{abstract}
Graham has constructed a variety with a map to the nilpotent cone which is similar in some ways to the Springer resolution.  One aspect in which Graham's map differs is that it is not in general an isomorphism over the principal orbit, but rather the universal covering map.  This map gives rise to a certain semisimple perverse sheaf on the nilpotent cone, and we discuss here the problem of describing the summands of this perverse sheaf.  For type $A_n$, a key tool is a known description of an affine paving of Springer fibers.
\end{abstract}

%

\section{Introduction}

Let $\kg$ be a complex semisimple Lie algebra with simply-connected algebraic group $G$ and Weyl group $\mathscr{W}$.  Let $Z$ be the center of $G$ and $G_{ad} = G/Z$ the adjoint group associated to $\kg$.  Fix a Borel subgroup $B$ in $G$ and let $\ku$ be the unipotent radical of $\Lie B$.  We can then define $\sNt = G\times^B \ku$ and the map $\mu:\sNt \rightarrow \kg$ which sends $(g, x)$ to $g\cdot x$.  The image of $\mu$ is the nilpotent cone $\sN$ in $\kg$, and $\mu$ is called the Springer resolution.  For any $x\in \sN$, let $\sO_x$ be the nilpotent orbit containing $x$ under the action of $G$.

In \cite{Springer1976} and \cite{Springer1978}, Springer first associated every irreducible representation of $\mathscr{W}$ to a nilpotent orbit $\sO_x$ and a local system on that orbit.  This correspondence, known as the Springer correspondence, is not generally a bijection between nilpotent orbits and Weyl group representations, nor between Weyl group representations and local systems on the orbits.  Every orbit appears at least once.  However, not all local systems appear, and all that do are $G_{ad}$-equivariant.

Many proofs of Springer's classical results are now known.  In particular, Borho and MacPherson prove it in \cite{Borho1983} by using the Decomposition Theorem for perverse sheaves to study $\mathrm{R}\mu_*\underline{\Qlb}_{\sNt}$ (where $\underline{\Qlb}_{\sNt}$ denotes the constant sheaf).  The goal of this paper is to begin an analogous construction using a variety described by Graham in \cite{Graham}.  Here, $\sNt$ and the Springer resolution are replaced by Graham's variety $\sMt$ and a new map $\tilde{\mu}:\sMt \rightarrow \sN$ which is no longer an isomorphism over the principal orbit of $\sN$.  The map $\tilde{\mu}$ factors through the Springer resolution and we see that when the Decomposition Theorem is applied, the local systems from the Springer correspondence are recovered, and additional ones occur as well in the cases where $G_{ad} \neq G$.  The main result of this paper is that in type $A_n$, where only the trivial local systems appear in the Springer correspondence, all of the $G$-equivariant local systems occur as a result of Graham's variety.   Even though the main focus will be on type $A_n$, several intermediate steps are valid in other types and are thus proven more generally.

The study of the local systems missing from the Springer correspondence is not new, and a rather different construction by Lusztig also addresses this issue.  His techniques have led to the generalized Springer correspondence and character sheaves.  (See \cite{Lu84}.)  In new collaborative work with William Graham and Martha Precup, we have been exploring the connection between this $\sMt$ and Lusztig's work. 

Section 2 reviews Graham's construction of $\sMt$ and $\tilde{\mu}$, and Section 3 explores the relationship between the torus orbits of Graham's construction and the nilpotent orbits of $G$.  In Section 4, the fibers of $\tilde{\mu}$ in type $A_n$ are studied using Tymoczko's combinatorial description of an affine paving of Springer fibers \cite{TymLin}.  The main results involving perverse sheaves can be found in Section 5.  An appendix contains tables that treat the results of Section 3 in the applicable exceptional types.

\section{Graham's Variety}

Let $\sO_{prin}$ be the principal orbit in $\sN$ and $\sO$ be its universal cover. Since $\sN$ is a normal variety \cite{Kostant1963}, $\sN = \Spec \mathrm{R}(\sO_{prin})$ where $\mathrm{R}(\sO_{prin})$ is the ring of regular functions for the principal orbit.  Let $\sM = \Spec \mathrm{R}(\sO)$.  In \cite{Graham}, Graham defines a map $\sMt \rightarrow \sM$ which is in some ways analogous to the Springer resolution for $\sM$.  He also gives a map $\tilde{\mu}: \sMt \rightarrow \sN$ which factors through the Springer resolution $\mu:\sNt\rightarrow \sN$.  It is this map $\tilde{\mu}$ and its relationship to the Springer resolution which is the focus of this paper.

Let $B$ be a Borel subgroup of $G$, and let $T$ be a maximal torus of $B$.  We know that the center $Z$ of $G$ is contained in $B$, so $B_{ad} = B/Z$ is a Borel subgroup of $G_{ad}$ with maximal torus $T_{ad}=T/Z$.   With this choice of a Borel subgroup and a maximal torus, we will assume the positive roots to be those in the Borel, so that the simple roots are determined by this choice.  Let $\alpha_1,...,\alpha_n$ denote these simple roots following the conventions of \cite{HumphreysLieAlg}.  Define $W_{ad}\subset \kg$ to be the span of the simple root vectors.  Note that $W_{ad}\cong \ku/[\ku,\ku]$.  Then $W_{ad}$ is a toric variety for $T_{ad}$ with lattice given by the character group of $T_{ad}$ and cone generated by the fundamental weights.  By following the same toric variety construction as for $W_{ad}$ but with the character group for $T$, we can construct a toric variety $W$ for $T$ such that $W/Z = W_{ad}$. This variety $W$ is not in general smooth.  However, in the case where $G=G_{ad}$, we have $W=W_{ad}$ and $\sMt = \sNt$. For a more detailed explanation of the construction of a toric variety, see \cite{Fulton} or \cite{Graham}.  

The Borel $B$ acts on $W$ through projection onto $T$, so that the unipotent part of $B$ acts trivially.  The action of $B$ on $W_{ad}$ comes from viewing $W_{ad}$ as the quotient of the nilradical $\ku/[\ku,\ku]$.  Let $p:\ku \rightarrow W_{ad} (= \ku/[\ku,\ku])$ and $q:W\rightarrow W_{ad}( = W/Z)$ be the $B$-equivariant projections.  We can then define $$\tilde{\ku} = W \times_{W_{ad}} \ku = \{(w,u)\ |\ p(u)=q(w)\}.$$  To clarify this, consider the diagram

\begin{center}
$\begin{CD}
\widetilde{\ku} @>\overline{q}>> \ku\\
@V\overline{p} VV @VVp V\\
W @>q >> W_{ad}
\end{CD}$
\end{center} where $\overline{p}$ and $\overline{q}$ complete our cartesian product.

\begin{defn}
\textit{Graham's variety}, $\sMt$, is $G \times^B \tilde{\ku}$. 
\end{defn}

In his paper, Graham outlines a map $\widetilde{\ku}\rightarrow \sM$ which he then extends to his analog of the Springer resolution $\sMt\rightarrow \sM$.  This map is proper and an isomorphism over the universal cover $\sO$ of the principal orbit.  However, it is not a resolution of singularities since $\sMt$ is not generally smooth.  

Of more interest to this paper, the quotient map $\overline{q}:\widetilde{\ku}\rightarrow\ku$ induces a finite quotient map $\gamma: G\times^B\widetilde{\ku}\rightarrow G\times^B\ku$.  Thus, we have a diagram \[\sMt\stackrel{\gamma}{\longrightarrow}\sNt\stackrel{\mu}{\longrightarrow}\sN\] 
where $\mu$ is our familiar Springer resolution.  These combine to give the result below.

\begin{thm}[Graham \cite{Graham}]
Let $\sMt$ and $\gamma$ be as above.  Then, the following diagram commutes.

\begin{center}
$\begin{CD}
\sMt @>\gamma>> \sNt\\
@V VV @VV\mu V\\
\sM @>>\ > \sN
\end{CD}$
\end{center}

\end{thm}


Note, the statement of the above theorem is not needed here, but it is included to help describe Graham's work. We will now restrict our focus to the map $\tilde{\mu} :=  \mu \circ \gamma$.  The map $\mu$ is the familiar Springer resolution, so we will start by attempting to understand $\gamma$.


The map $\gamma:\sMt\rightarrow \sNt$ ultimately comes from the quotient map $q : W \rightarrow W_{ad}$, and Graham gives a method to determine the fibers of $q$ in terms of the $T_{ad}$-orbits in $W_{ad}$.  Fix $\alpha_1,...,\alpha_n$ to be simple roots following the conventions of \cite{HumphreysLieAlg}.  Let $X_\alpha\in \kg$ be the root vector corresponding to the root $\alpha$.  Then, $W_{ad}$ is isomorphic to the span of $\{X_{\alpha_i}\ |\ \alpha_i$ is a simple root\}.  The $T_{ad}$-orbits correspond to faces of the cone of $W_{ad}$, and we can describe them using subsets $J$ of $\{1,2,\ldots ,n\}$.  Each subset $J$ corresponds to the simple roots that are not included in the face.  We will denote by $\tau_J$ the face of the cone and by $\sO_{ad}(\tau_J)$ the $T_{ad}$-orbit corresponding to the subset $J$, so $\sO_{ad}(\tau_J)  = \mathrm{Span}\{ X_{\alpha_i}\ |\ i\notin J\}$.  We will use $Z(J)$ to denote the fiber of $q$ over any element in that orbit.  In fact, $Z(J)$ has a group structure and can be seen as a subgroup of $T$.  We will see this in the upcoming proof of Proposition 3.2.  Note that by the definition of $\sMt$, the fibers of $q$ determine the fibers of $\gamma$.

\begin{thm}\label{ZJ}
Let $J$ be a nonempty subset of $\{1, 2, \ldots , n\}$.  Then, the group $Z(J)$ is given below for the simple $\mathfrak g$ in each relevant type.
\end{thm}

\begin{table}[H]
\begin{flushleft}
\begin{tabular}{l l l l}
\B $A_n$ : & $Z(J)=$ & $\Z/c\Z$ & \hspace{5mm} where $c = \gcd(J\cup\{n+1\})$\\
 \T \B $B_n$ : & $Z(J)=$ & $\Z/2\Z$ & \hspace{5mm} if all $j \in J$ are even\\
\B & $Z(J)=$ & $\{1\}$ & \hspace{5mm} otherwise\\
\T \B $C_n$ : & $Z(J)=$ & $\Z/2\Z$ & \hspace{5mm} if $n \notin J$\\
\B & $Z(J)=$ & $\{1\}$ & \hspace{5mm} otherwise\\
%
\T \B $D_n$ : & $Z(J)=$ & $\Z/2\Z\times\Z/2\Z$ & \hspace{5mm} if $n-1,n \notin J$ and all $j\in J$ are even\\
\B & $Z(J)=$ & $\Z/2\Z$ & \hspace{5mm} if $n-1,n \notin J$ and not all $j\in J$ are even\\
 & $Z(J)=$ & $\Z/2\Z$ & \hspace{5mm} if exactly one of $n-1$ and $n$ is in $J$,\\
 & & & \hspace{5mm} all $j\in J$ such that $j<n-1$ are even,\\
 \B & & & \hspace{5mm} and $n = 4k+2$ for some $k\geq 1$\\
  \B & $Z(J)=$ & $\{1\}$ & \hspace{5mm} otherwise\\
\T \B $E_6$ : & $Z(J)=$ & $\Z/3\Z$ & \hspace{5mm} if none of $1,3,5,6$ are in $J$;\\
\B & & &  \hspace{5mm} else $Z(J) = \{1\}$\\
\T \B $E_7$ : & $Z(J)=$ & $\Z/2\Z$ & \hspace{5mm} if none of $2,5,7$ are in $J$;\\
\B & & &  \hspace{5mm} else $Z(J) = \{1\}$\\
\end{tabular}
\end{flushleft}
\end{table}

\begin{proof}
As noted above, an orbit in the toric variety corresponds to a face of the cone.  Then, an orbit $\sO(\tau_J)$ is isomorphic to the torus $T(\tau_J) = \Hom(\tau_J^{\perp}\cap \widehat{T},\C^*)$ where $\widehat{T}$ is the character group for $T$, i.e. the weight lattice.  Analogous statements and definitions for the torus $T_{ad}$ can be made.  Let us define \[Z(J) = \ker(T(\tau_J)\rightarrow T_{ad}(\tau_J)).\] Then the character group for $Z(J)$ is \[\widehat{Z}(J) = \widehat{T}(\tau_J)/ \widehat{T}_{ad}(\tau_J) = (\tau_J^{\perp} \cap\widehat{T})/(\tau_J^{\perp}\cap \widehat{T}_{ad}).\]  Graham proves in \cite{Graham} that $\sO(\tau_J) \rightarrow \sO_{ad}(\tau_J)$ is a covering map with fibers $Z(J)$.  To determine $\widehat{Z}(J)$, note that $(\tau_J^{\perp} \cap\widehat{T})/(\tau_J^{\perp}\cap \widehat{T}_{ad})$ maps injectively into $\widehat{T}/\widehat{T}_{ad} =Z$ which is the abstract fundamental group of the root system.  To see the elements in $Z$, we will follow the conventions found in Section 13.2 of \cite{HumphreysLieAlg}, where a table can be found that lists the dominant weights.  The elements in this quotient group can be represented by particular dominant weights $\lambda_i$, where two dominant weights are in the same coset if their difference can be written with integer coefficients for all simple roots $\alpha_j$.  To describe the image of $\widehat{Z}(J)$ in $Z$, and thus determine $Z(J)$, a coset $\lambda + \widehat{T}_{ad}$, where $\lambda$ is a dominant weight, is in the image if it has nonempty intersection with $\tau_J^{\perp}$, which means the coefficients for all $\alpha_j$ in $\lambda$ are integers if $j\in J$.   Directly checking when these coefficients are integers gives the table above.  For more details, see \cite{diss}.

\end{proof}

\section{$G$-orbits and Graham's Fibers}
The Bala--Carter theorem associates to each orbit $\Ox\subset \sN$ a pair $(\kl, \sO_{x,\kl})$, where $\kl$ is the smallest Levi subalgebra meeting $\Ox$ and $\sO_{x,\kl} = \Ox \cap \kl$.  In these terms, the orbits $\Ox$ that meet $W_{ad}$ are those for which $\sO_{x,\kl}$ is principal in $\kl$.  In other words, those orbits $\Ox$ so that some element in $\Ox$ is a sum of simple root vectors. In type $A_n$, all of the $G$-orbits are of this type, so they all intersect $W_{ad}$.  In the other types, not all orbits have this property.  In the following theorem, we give the inclusion correspondence between $T_{ad}$-orbits in $W_{ad}$ and $G$-orbits in $\sN$ for the classical types in terms of the $G$-orbits'  partition classification.  Note that multiple $T_{ad}$-orbits can be contained in the same $G$-orbit, and we are viewing $W_{ad}$ as a subset of $\sN$ in this instance rather than the quotient $\ku/[\ku,\ku]$. The partition notation comes from \cite{NilOrbBk}, except that there may be repetition of some parts and the order may not be decreasing.  For example, $[3^2\ 1]$ may appear as $[3\ 3\ 1]$, $[3\ 1\ 3]$, or $[1\ 3\ 3]$.  For this reason, we will refer to the partitions in the following statement as \textit{unreduced}.  This correspondence for the exceptional types can be found in the Appendix.

\begin{prop}\label{part}
Let $\kg$ be a Lie algebra of classical type.  Let $J = \{ d_1, d_2,\ldots , d_r\}$ be a subset of $\{1, 2,\ldots , n\}$ with the assumption that $d_i < d_j$ if $i<j$ .  Then $\sO_{ad}(\tau_J)$ is contained in the $G$-orbit given by the unreduced partition ${P_J}$.  

\begin{table}[h]
\begin{flushleft}
\begin{tabular}{l@{}l@{}l}
$A_n$ : & $\ {P_J}=$ &  $[(n+1-d_r)\ (d_r - d_{r-1})\ \ldots \ (d_2 - d_1)\ d_1]$ \\\\
$B_n$ : & $\ {P_J}=$ & $[\ (2(n-d_r)+1)\ (d_r - d_{r-1})^2\ldots \ (d_2 - d_1)^2\ d_1^2\ ]$ \\\\
$C_n$ : & $\ {P_J}=$ & $[\ 2(n-d_r)\ (d_r - d_{r-1})^2\ldots \ (d_2 - d_1)^2\ d_1^2\ ]$ \\\\
$D_n$ : & $\ {P_J}=$ & $[\ (2(n-d_r)-1)\ (d_r - d_{r-1})^2\ldots \ (d_2 - d_1)^2\ d_1^2\ 1\ ] $ if $n-1, n \notin J$\\
\T & $\ {P_J}=$ & $[\ (d_r - d_{r-1})^2\ldots \ (d_2 - d_1)^2\ d_1^2\ ] $ if $n-1, n \in J$\\
\T & $\ {P_J}=$ & $[\ (n-d_{r-1})^2\ (d_{r-1} - d_{r-2})^2\ldots \ (d_2 - d_1)^2\ d_1^2\ ] $ if $n-1$ or $n\in J$
\end{tabular}
\end{flushleft}
\end{table}

\end{prop}

\begin{proof}
We take a representative $X_J$ of a set $J$ to be the sum of the root vectors $X_{\alpha_i}$ for all $i \in \{1, 2,\ldots , n\}-J$ where $\alpha_i$ is a simple root following the notation of Humphreys \cite{HumphreysLieAlg}.  Since each $T_{ad}$-orbit is contained in some $G$-orbit, by calculating the Jordan canonical form for our representative, we are able to associate a single $G$-orbit to each $J$.  We will use the root vector conventions found in \cite{NilOrbBk}.  

In type $A_n$, the root vector $X_{\alpha_i} = E_{i,\ i+1}$ where $E_{i,j}$ is a matrix with a one in the $i$th row and $j$th column and zeroes everywhere else.  In this case, we see $X_J$ is already in Jordan canonical form, and consecutive numbers not in $J$ give us the sizes of the Jordan blocks.  Thus, the formula for ${P_J}$ is given by the distance between the consecutive elements in $J$, the distance between the largest element in $J$ and $n+1$, and the distance between the smallest element and zero. 

Suppose we are in type $B_n$.  Then, $X_{\alpha_i} = E_{i+1,\ i+2} - E_{n+i+2,\ n+i+1}$ if $1\leq i\leq n-1$ and $X_{\alpha_n} = E_{1,\ 2n+1} - E_{n+1,\ 1}$.  When putting $X_J$ in Jordan canonical form, we see that $E_{i,j}$ and $E_{j,k}$ give rise to elements in the same block, and this determines the blocks, so long as each $i$ occurs only once as the first index and once as the second.  From the definitions of $X_\alpha$, we see that a maximal set of consecutive roots creates two blocks, each with size equal to the size of the set, except when one of the roots is $\alpha_n$.  The root vector $X_{\alpha_n}$ forms a single block of size three when considered alone. Consequently, if $\alpha_n$ is included in the set, the block formed has size one more than twice the size of the set.  As before, we compute the sizes of the blocks by taking the distance between consecutive elements not in $J$. However, each distance now corresponds to two blocks instead of one, unless $J$ does not contain $n$.

Now, suppose we are in type $C_n$.  In this case, $X_{\alpha_i} = E_{i,\ i+1} - E_{n+i+1,\ n+i}$ if $1\leq i\leq n-1$ and $X_{\alpha_n} = E_{n,\ 2n}$. As in type $B_n$, a maximal set of consecutive roots creates two blocks with size equal to the size of the set with the exception of when $\alpha_n$ is in the set.  Any maximal set of $k$ consecutive roots containing $\alpha_n$ forms a block of size $2k$.  So again, we compute the sizes of the blocks by taking the distance between consecutive elements not in $J$ with each distance corresponding to two blocks except when $n$ is not contained in $J$.  

Finally, suppose we are in type $D_n$.  Then, $X_{\alpha_i} = E_{i,\ i+1} - E_{n+i+1,\ n+i}$ if $1\leq i\leq n-1$ and $X_{\alpha_n} = E_{n-1,\ 2n} - E_{n,\ 2n-1}$. We now have three distinct cases to consider. To begin, suppose both $n-1$ and $n$ are in $J$.  Then, we can proceed as in type $C_n$.  Next, suppose neither $n-1$ nor $n$ is in $J$.  This works as before except $X_{\alpha_{n-1}} + X_{\alpha_{n}}$ forms one block of size three and another of size one.  Then, each consecutive root for $\alpha_{n-1}$ adds two to the size of the larger block.  This gives the block of size $2(n-d_r)-1$ appearing in the formula.  Lastly, suppose exactly one of $n$ and $n-1$ is in $J$. The block calculation follows the same in either case due to the definitions of $X_{\alpha_{n-1}}$ and $X_{\alpha_n}$, so that only $n$ appears in the formula.  

\end{proof}

\begin{remark}
A partition is called \textit{very even} if it has only even parts, each with even multiplicity.  In the case of type $D_n$, very even partitions give two distinct nilpotent orbits.  In order to completely classify the correspondence between $T_{ad}$-orbits and $G$-orbits, the weighted Dynkin diagrams would need to be calculated if ${P_J}$ is very even in type $D_n$. This can only happen if ${P_J}$ is of the third type listed in Proposition \ref{part}, and the result should be that the orbit is determined by whether $n$ or $n-1$ is in $J$.  Since no distinction between these two orbits is necessary for the purposes of this paper, the calculation is omitted.  See \cite{NilOrbBk} for the weighted Dynkin diagrams of the two orbits.
\end{remark}

For any algebraic group $G$, there is a $G$-equivariant fundamental group for each nilpotent orbit $\Ox$ defined to be the component group of the centralizer of $x$ in $G$.  That is, $G^{x}/(G^x)^\circ$.  If $G$ is the simply-connected algebraic group, this $G$-equivariant fundamental group is isomorphic to the actual fundamental group of the orbit, so we will denote this component group by $\pi_1(\Ox)$.  If $G$ is the adjoint group $G_{ad}$, we denote the $G_{ad}$-equivariant fundamental group by $A(\Ox)$.  These groups are known for each orbit in every type.  Note that the map $G\rightarrow G/Z = G_{ad}$ induces a map from $\pi_1(\Ox)$ to $A(\Ox)$.  

Let $P$ be a partition of $n$ of the appropriate form for each classical Lie algebra, and let $\sO_x$ be the nilpotent orbit associated to $P$.  In Table \ref{tab:fund}, we give formulas for the fundamental group $\pi_1(\mathscr{O}_x)$ of each orbit and the $G_{ad}$-equivariant fundamental group $A(\mathscr{O}_x)$ as found in \cite{NilOrbBk}. To simplify our formulas, let

$a =$ the number of distinct odd parts in $P$,

$b =$ the number of (nonzero) distinct even parts in $P$, and

$c =$ the greatest common divisor of all parts in $P$.\\
Also, a partition is called \textit{rather odd} if all of its odd parts have multiplicity one.  Notice, a very even partition is trivially rather odd.

\begin{table}[h]
	\begin{center}
		\begin{tabular}{c c c}
		\hline\hline
		\T Lie Algebra & $\pi_1(\mathscr{O}_x)$  &  $A(\mathscr{O}_x)$ \\
		\hline\hline
		\T $\mathfrak{sl}_n$  &  $\mathbb{Z}/c\mathbb{Z}$  &  1	\\
		\hline
		\T $\mathfrak{so}_{2n+1}$  &  If $P$ is rather odd, a central  &  $(\mathbb{Z}/2\mathbb{Z})^{a-1}$ \\
														&  extension by $\mathbb{Z}/2\mathbb{Z}$ of  &\\
														&  $(\mathbb{Z}/2\mathbb{Z})^{a-1}$; &\\
														&  otherwise, $(\mathbb{Z}/2\mathbb{Z})^{a-1}$ & \\
		\hline
		\T $\mathfrak{sp}_{2n}$  &  $(\mathbb{Z}/2\mathbb{Z})^{b}$  &  $(\mathbb{Z}/2\mathbb{Z})^{b}$ if all even parts\\
													&																	&	 have even multiplicity;\\
													&																	&  otherwise, $(\mathbb{Z}/2\mathbb{Z})^{b-1}$\\
		\hline
		\T $\mathfrak{so}_{2n}$  &  If $P$ is rather odd, a central & $(\mathbb{Z}/2\mathbb{Z})^{max(0,\ a-1)}$ if all \\
													&  extension by $\mathbb{Z}/2\mathbb{Z}$ of  &  odd parts have even \\
													&  $(\mathbb{Z}/2\mathbb{Z})^{max(0,\ a-1)}$;  & multiplicity; otherwise, \\
													&  otherwise, $(\mathbb{Z}/2\mathbb{Z})^{max(0,\ a-1)}$ & $(\mathbb{Z}/2\mathbb{Z})^{max(0,\ a-2)}$\\
		\hline\hline
		\end{tabular}
		\end{center}
	\caption{Equivariant Fundamental Groups}
	\label{tab:fund}
\end{table}

Although each nilpotent orbit $\Ox$ can contains multiple $T_{ad}$-orbits $\sO_{ad}(\tau_J)$, the fiber $Z(J)$ is the same for all $\sO_{ad}(\tau_J)$ in any particular $G$-orbit.  This can be seen combinatorially using Proposition \ref{ZJ} and \ref{part}, but it can also be seen from the following result.  Note that we are again viewing $W_{ad}$ as a subset of $\sN$.

\begin{prop}\label{fundprop1}
Let $x$ be in the orbit $\sO_{ad}(\tau_J)$ of $W_{ad}$.  Then, $Z(J)$ is isomorphic to the kernel of the quotient map from $\pi_1(\Ox)$ to $A(\Ox)$.
\end{prop}

\begin{proof} First, let us examine more closely the definition of $Z(J)$.  Suppose $x\in \sO_{ad}(\tau_J)$.  Then, $q: W\rightarrow W_{ad}$ is the quotient by the action of $Z$, and we can choose some $y\in W$ such that $q(y) = x$.  We know the center $Z$ of $G$ is contained in $T$ and acts on $W$. Let $y\in W$ and $Z^{y}:= \mathrm{Stab}_Z(y)$.  By definition, $Z^{y}=Z\cap T^y$.  Then $Z(J)$ is the kernel of the map $$T\cdot y \cong T/T^y\longrightarrow T\cdot x \cong T/T^x\cong T/(T^yZ).$$  This kernel is $(T^yZ)/T^y \cong Z/(Z\cap T^y) = Z/Z^y$.  Thus, the fibers $Z(J)$ of the map $q:W\rightarrow W_{ad}$ are given by $Z/Z^y$ for any $y \in q^{-1}(x)$.

Observe that the kernel $K_x$ of the quotient map $$\pi_1(\Ox)=G^{x}/(G^x)^\circ \twoheadrightarrow G^{x}/((G^x)^\circ Z)=A(\Ox)$$ is isomorphic to $Z/(Z\cap(G^x)^\circ)$ since $Z/(Z\cap(G^x)^\circ) \cong ((G^{x})^\circ Z)/(G^x)^\circ.$  Now, we will construct a surjective map from $Z(J)$ to $K_x$.  Because of the way $x$ and $y$ are defined, we know $T^y\subseteq T^x$, and moreover, since the dimensions of the $T$-orbits of $x$ and $y$ are the same, we know that $T^y$ must be a collection of components of $T^x$.  Then, $T^y$ is connected since $y\in W$ and $W$ is a toric variety for $T$.  (See \cite{Val}.) Hence, $T^y= (T^x)^{\circ}$ and thus, $Z^{y} = Z\cap (T^x)^{\circ}$.  Then, this leads to a surjective map
$$Z(J)=Z/Z^y = Z/(Z\cap(T^x)^\circ) \twoheadrightarrow Z/(Z\cap(G^x)^\circ)=K_x.$$

To complete this proof, we must show that this surjective map is actually an isomorphism.  Now, we need only show that these groups have the same size.  

Consider first type $A_n$.  Let $J = \{d_1, d_2,\ldots , d_r\}$.  Then we know $Z(J) = \Z/c\Z$ where $c = \gcd\{d_1, d_2,\ldots , d_r, n+1\}$.  From Proposition \ref{ZJ} and Proposition \ref{part}, we see that $c$ is also the greatest common divisor of the parts in ${P_J}$.  From \cite{NilOrbBk}, Corollary 6.1.6, we see that $\pi_1(\Ox) = \Z/c\Z$ as well.  Note that since $A(\Ox)$ is trivial in this type, the kernel here is all of $\pi_1(\Ox)$.

Let us denote the multiplicity of a part $d$ in a partition $P_J$ to be $\multi_{P_J}(d)$. Let us now suppose we are in type $B_n$.  Then ${P_J}$ can be of two forms.  If ${P_J}$ has exactly one odd part $d$ with $\multi_{P_J}(d) = 1$, then $Z(J) = \mathbb{Z}/2\mathbb{Z}$ since all elements of $J$ must be even if all parts with even multiplicity are even.  If ${P_J}$ has more than one odd part or an odd part with multiplicity greater than one, then $Z(J) = \{1\}$ since this can only happen when not all elements in $J$ are even.

Let us now consider type $C_n$.  Again, ${P_J}$ has two possible forms.  If ${P_J}$ is such that $\multi_{P_J}(d)$ is even for all parts $d$, then we know $n \in J$ and $Z(J) = \{1\}$.  If ${P_J}$ has exactly one part with odd multiplicity, then we know $n \notin J$ and $Z(J) = \mathbb{Z}/2\mathbb{Z}$.

Let us finally suppose we are in type $D_n$.  In this case, we have four possible forms for ${P_J}$.  If ${P_J}$ is such that $\multi_{P_J}(1) = 1$, $\multi_{P_J}(d) = 1$ for some odd part $d$, and the rest of the parts are even numbers with even multiplicity, then $Z(J) = Z$ since this corresponds to $J$ having all even elements and $n, n-1 \notin J$.  If ${P_J}$ is such that $\multi_{P_J}(1) = 1$ and there is some odd part $d$ with $\multi_{P_J}(d) > 1$, then $Z(J) = \mathbb{Z}/2\mathbb{Z}$ since this corresponds to not all elements of $J$ being even and $n, n-1 \notin J$.  If ${P_J}$ has all even parts with even multiplicities, then this corresponds to the third condition for $D_n$ in Theorem \ref{ZJ} and $Z(J) = \mathbb{Z}/2\mathbb{Z}$.  Lastly, if ${P_J}$ has $\multi_{P_J}(d)$ even for all parts $d$ and at least one $d$ is odd, then $Z(J) = \{1\}$.

Since the form of our partition $P$ tells us what $\pi_1(\mathscr{O}_x)$ and $A(\mathscr{O}_x)$ are in Table \ref{tab:fund}, we see from the above reasoning that $Z(J)$ is isomorphic to the kernel of the quotient map from $\pi_1(\mathscr{O}_x)$ to $A(\mathscr{O}_x)$.  For types $E_6$ and $E_7$, we can examine Tables \ref{tab:E6data}, \ref{tab:E7data}, and \ref{tab:E7data2} in the Appendix to see that the same is true in these types for any orbit $\Ox$ with $x\in W_{ad}$.
\end{proof}

Now that we see $Z(J)$ depends on the orbit of $x$ in $\sN$, not in $W_{ad}$, let us modify our notation to be $Z(J_x)$ to recall this fact.  Moreover, in type $A_n$, every $x\in \sN$ is conjugate to an element of $W_{ad}$, so we can define $Z(J_x)$ for any element in $\sN$ in this case to be the kernel of the quotient map above.  The fact that $A(\Ox)$ is trivial in type $A_n$ allows us to also make the following statement.

\begin{cor}\label{zpi}
If we are in type $A_n$ and $x\in W_{ad}$, then $Z(J_x)$ is isomorphic to $\pi_1(\Ox)$.
\end{cor}

Let us now extend our discussion to the map $\gamma:\sMt\rightarrow \sNt$.  First, recall the construction of $\widetilde{\ku}$.   We have $B$-equivariant maps $q: W\rightarrow W_{ad}=W/Z$ and $p: \ku\rightarrow W_{ad} = \ku/[\ku,\ku]$, and we define $\widetilde{\ku}$ to be the fiber product $W\times_{W_{ad}}\ku$.  Thus, we have the following $B$-equivariant Cartesian diagram:
\begin{center}
$\begin{CD}
\widetilde{\ku} @>\overline{q}>> \ku\\
@V\overline{p} VV @VVp V\\
W @>q >> W_{ad}
\end{CD}$
\end{center}

Now, let us restrict this picture to a single $T_{ad}$-orbit in $W_{ad}$.  Let $\sO(\tau)$ be the $T$-orbit in $W$ corresponding to the face $\tau$ of a cone for $W$, and $\sO_{ad}(\tau)$ denote the corresponding orbit in $W_{ad}$.  Define $\ku(\tau)$ to be $p^{-1}(\sO_{ad}(\tau)) \subset \ku$.  Then, we can prove the following general statement, which will be particularly useful in our analysis of the fibers in the $A_n$ case later.
\begin{prop}\label{cover}
Let us suppose $x\in \sO_{ad}(\tau)$.  Then, over the space $G\times^{B} \ku(\tau)$, $\gamma$ is a covering map with fiber $Z(J_x)$.
\end{prop}

\begin{proof}
 From \cite{Graham}, we know the map $q_\tau:\sO(\tau)\rightarrow \sO_{ad}(\tau)$ is a covering map with fibers given by $Z(J_x)$ where $x\in \sO_{ad}(\tau)$.  Graham's proof of this is simply noting that the orbits are actually tori and the map here corresponds to the map between the tori.  To simplify the notation in our diagram, let us define $\ku(\tau) := p^{-1}(\sO_{ad}(\tau)) \subset \ku$ and $\widetilde{\ku}(\tau) = \overline{p}^{-1}(\sO(\tau))\subset \widetilde{\ku}$.  We will use a $\tau$ subscript to denote the restrictions of the maps.  Now, our restricted diagram becomes:
\begin{center}
$\begin{CD}
\widetilde{\ku}(\tau) @>\overline{q}_\tau>> \ku(\tau)\\
@V\overline{p}_\tau VV @VVp_\tau V\\
\sO(\tau) @>q_\tau >> \sO_{ad}(\tau)
\end{CD}$
\end{center}
We know that since $q_\tau$ is a covering map with fibers $Z(J_x)$, $\overline{q}_\tau$ is as well.  Thus, the induced map $\widehat{{q}_\tau}:G\times \widetilde{\ku}(\tau) \rightarrow G\times \ku(\tau)$ is a covering map with these fibers.  We then have the following diagram:
\begin{center}
$\begin{CD}
G\times \widetilde{\ku}(\tau) @>\widehat{q}_\tau>> G\times \ku(\tau)\\
@V VV @VV V\\
G\times^{B} \widetilde{\ku}(\tau) @>\gamma_\tau >> G\times^{B} \ku(\tau)
\end{CD}$
\end{center}
Since $\widehat{{q}_\tau}$ is a covering map with fibers $Z(J_x)$, so is the map $\gamma_\tau$.  
 \end{proof}

 
\section{Some Irreducible Components of Graham's Fibers in Type $A_n$}

For our perverse sheaf calculations, we will need to know more about the top degree cohomology of the Graham fibers $\Mx$.  More specifically, since the irreducible components of $\Mx$ parametrize a basis for this cohomology, we need to know more about the irreducible components of $\Mx$.  While we will not obtain a complete picture of these irreducible components here, we will ascertain enough to make the desired perverse sheaf statement.  In particular, we will use a combinatorially described affine paving for the Springer fibers in type $A_n$ to find a particular affine space $\sA_x$ of maximal dimension inside $\Bx$ and then show that over $\sA_x$, Graham's map $\gamma$ is a covering map with fibers $\pi_1(\Ox)$.   The specific description we use for this affine paving follows Tymoczko \cite{TymLin}, where her result is a generalization of this to Hessenberg varieties.  However, the original result for Springer fibers in type $A_n$ is due to Shimomura \cite{shim}.

%
%
%

Since these results are type-specific, let us assume $\sN$ is the nilpotent cone for $\ksl_n(\mathbb{C})$ and is thus the set of nilpotent $n$ by $n$ complex matrices with trace zero.  Let $x\in \sN$ be a matrix in Jordan canonical form. Let $B$ be the Borel subgroup of $G$ consisting of linear transformations represented by upper triangular matrices. Thus, if we were to follow Graham's construction of $\sMt$ using $B$ as our chosen Borel subgroup, we would have $x\in W_{ad}$ since it is the sum of simple root vectors for $B$.  Let $\sO_{ad}(\tau_J)$ denote the $T_{ad}$-orbit containing $x$ in $W_{ad}$.  Let $P = [d_r\ d_{r-1}\ \ldots \ d_1]$ with $d_i\geq d_{i-1}$ be the partition of $n$ corresponding to $x$ and denote by $Y_P$ the Young diagrams for $P$.  Here, we follow the convention that $Y_P$ is the left-justified array of boxes where the $i$th row from the bottom has $d_i$ boxes.  Next, we will describe two different labellings of this diagram.  For the first, fill the blocks with $\{1, 2,\ldots ,n\}$ in increasing order starting at the bottom left and moving up the columns, treating the columns left to right.  We will call $Y_P$ with this labelling $Y_P^{Tym}$.  For the second, again label $Y_P$ with $\{1, 2,\ldots ,n\}$ in increasing order, but this time start at the top left and fill in the rows.  We will call $Y_P$ with this labelling $Y_P^{Std}$.  Let $\w$ be the permutation taking $Y_P^{Std}$ to $Y_P^{Tym}$, i.e $\w(j)$ is the number in $Y_P^{Tym}$ occupying the same box that $j$ occupies in $Y_P^{Std}$.\\


\begin{example}
Let us suppose we are in $\ksl_5(\mathbb{C})$.  Let $x\in \sN$ be an element in the orbit with partition $P = [2\ 2\ 1]$.  Then

\begin{multicols}{3}

 $x = \begin{bmatrix}

0 & 1 & 0 & 0 & 0 \\
0 & 0 & 0 & 0 & 0 \\
0 & 0 & 0 & 1 & 0 \\
0 & 0 & 0 & 0 & 0 \\
0 & 0 & 0 & 0 & 0 \\

\end{bmatrix}$,

\columnbreak
$Y_P^{Tym}$ =  \ytableausetup{centertableaux} \begin{ytableau}
3 & 5  \\ 2 & 4 \\ 1
\end{ytableau} ,\\

\columnbreak

$Y_P^{Std}$= \ytableausetup{centertableaux} \begin{ytableau}
1 & 2  \\ 3 & 4  \\ 5
\end{ytableau} ,\\

\end{multicols}
and $\w= (1\ 3\ 2\ 5)$.\\
\end{example}

Let us call any two adjacent boxes in the same row of a labelled diagram a \textit{pair} and use the notation $(i|j)$ to mean a pair where the label in the left box is $i$ and the label in the right one is $j$.  We can use the above labellings of $Y_P$ to form nilpotent matrices by placing a $1$ in the $i$th row and $j$th column for every pair $(i|j)$ in the labelled diagram and by filling the remaining entries with $0$'s.  (In other words, this is the sum of all elementary matrices $E_{i,j}$ where $(i|j)$ is a pair.)  The matrix obtained this way from $Y_P^{Std}$ is $x$.  The matrix $\overline{x}$ obtained this way from $Y_P^{Tym}$ is not in Jordan canonical form typically, but rather what Tymoczko calls \textit{highest form}. Furthermore, $\overline{x} = \sigma\cdot x$ where the action is conjugation by the appropriate permutation matrix. 

In \cite{TymLin}, Tymoczko describes an affine paving of the Springer fibers using intersections with Schubert cells.  Since the Schubert cells decompose the flag variety, we need to realize our Springer fibers as subsets of the flag variety.  We defined $\sNt$ as $G\times^B\ku$ since this definition lets us most clearly see our map $\gamma:\sMt \rightarrow \sNt$.  However, we also have \[G\times^B\ku \cong \{(gB, u)\in G/B\times \sN\colon g^{-1}\cdot u \in \ku\}\]  which leads to the Springer fiber description \[\mu^{-1}(u) = \{(gB,u)\in G/B\times\{u\}\colon g^{-1}\cdot u \in \ku\} \cong \{gB\in G/B \colon g^{-1}\cdot u\in \Lie(B)\}.\]  Note that here we use $u$ as our general nilpotent element since $x$ has been previously fixed.

Define $\sA_x := \hB \w \hB \cap \mu^{-1}(\overline{x})$ where $\hB \w \hB$ is the Schubert cell associated to $\w$.  For this to make sense, we are viewing $\mu^{-1}(\overline{x})$ here really as its image in $G/B$.  Thus, $\sA_x$ is a subset of $G/B$. However, just as $\mu^{-1}(\overline{x})$ is contained in $\sNt$, so is $\sA_x$.  For this, we use the definition of $B$ given above to see \[\sA_x= \{ (\overline{x},b\sigma B) \in \{\overline{x}\}\times \mathscr{B}\ | b\in B\mathrm{\ and\ } (b\sigma)^{-1}\overline{x}\in \Lie B\}\] which is contained in \[\sNt = \{ (u, B')\in \sN \times \mathscr{B}\ |\ u\in \sN \mathrm{\ and\ }u\in \Lie B'\}.\] In the future, it should be clear from the context whether we are viewing $\sA_x$ as in $G/B$ or in $\sNt$.

For any $w \in \W$, we will denote by $\Phi_{w}$ the set of positive roots for $B$ that become negative under the action of $w^{-1}$.  For any $u \in \sN$, we will let $\Phi_u$ be the positive roots for $B$ whose vectors appear as summands when $u$ is decomposed as the sum of root vectors.  We will denote by $\Phi_{w,u}$ the subset of $\Phi_{w}$ whose elements can be viewed as a sum of a collection of roots in $\Phi_{w}$ and a root in $\Phi_u$.  We have chosen $\hB$ and $Y_P^{Tym}$ such that the relevant parts of Tymoczko's Corollary 6.3 in \cite{TymLin} can be phrased as follows in the special case that the Hessenberg space is $\Lie \hB$ and the corresponding Hessenberg variety is $\mu^{-1}(\overline{x})$.

\begin{thm}[Shimomura, Tymoczko] \label{Tymoczko}

(a) Let $w\in \W$.  The Schubert cells $\hB w \hB$ intersect each Springer fiber in a paving by affines.  The nonempty intersections are $\hB w\hB\cap \mu^{-1}(\overline{x})$ where $\Ad(w^{-1}) x \in \Lie \hB$, and they have dimension
$$ |\Phi_w| - |\Phi_{w,\overline{x}}|.$$

\noindent(b) The nonempty cells from (a) correspond to the permutations $w$ such that $w^{-1}(Y_P^{Tym})$ has the property that $i<j$ for each pair $(i|j)$.  

\end{thm}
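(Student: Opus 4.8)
The plan is to deduce the statement from a Bruhat-cell computation, much as in the classical analysis of Springer fibres (Spaltenstein, Springer), and then to read off the type $A_n$ combinatorics from the matrix picture set up above.

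For part (a), one starts from the Bruhat decomposition $G/\hB=\bigsqcup_{w\in\W}\hB w\hB$ and, with the conventions fixed above, identifies each cell $\hB w\hB/\hB$ with the affine space $\prod_{\alpha\in\Phi_w}U_\alpha$ via $u\mapsto u\dot w\hB$ (after fixing an ordering of $\Phi_w$). A point $u\dot w\hB$ of this cell lies in $\Bx=\{g\hB:\Ad(g^{-1})x\in\ku\}$ exactly when $\Ad(u^{-1})x\in\Ad(\dot w)\ku$, that is, when the component in $\kg_\delta$ of $\Ad(u^{-1})x$ vanishes for each root $\delta$ with $\kg_\delta\not\subseteq\Ad(\dot w)\ku$ (call these the obstruction roots, a subset indexed by $\Phi_w$). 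Writing $\Ad(u^{-1})x$ as $x$ plus iterated brackets of the $\kg_\alpha$ (for $\alpha\in\Phi_w$) against $x$, each such component becomes a polynomial in the cell coordinates $(t_\alpha)_{\alpha\in\Phi_w}$, and I would analyse the resulting system by induction on the height of $\delta$. The $\kg_\delta$-component carries a nonzero constant term precisely when $\delta\in\Phi_x$, and there is nothing to cancel it: the roots in $\Phi_x$ being simple for $B'$, the construction of $Y_P^{Tym}$ guarantees that no bracket term lands in such a $\kg_\delta$. Hence the cell is nonempty if and only if no obstruction root lies in $\Phi_x$, equivalently $\Ad(\dot w^{-1})x\in\ku$, equivalently $\Ad(w^{-1})x\in\Lie\hB$ (using that a nilpotent element of $\Lie\hB$ lies in $\ku$). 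When the cell is nonempty, the remaining conditions are controlled by the roots $\delta=\alpha+\gamma$ with $\alpha\in\Phi_w$ and $\gamma\in\Phi_x$, that is, by $\Phi_{w,x}$, and the key point is that, in a suitable order, each such $\delta$ contributes a single equation in which one coordinate $t_{\alpha}$ occurs linearly with an invertible (structure-constant) coefficient while all other coordinates present have strictly smaller height. Solving these in turn eliminates $|\Phi_{w,x}|$ of the coordinates as polynomial functions of the remaining ones, which stay free, exhibiting $\hB w\hB\cap\Bx$ (when nonempty) as the affine space $\mathbb{A}^{|\Phi_w|-|\Phi_{w,x}|}$. This gives (a). It is worth noting that, since $x$ is a sum of root vectors along linearly independent roots, there is a cocharacter $\lambda$ of the maximal torus with $\Ad(\lambda(s)^{-1})x=s^{-1}x$; the associated $\mathbb{C}^\times$-action preserves $\Bx$, and for a suitably chosen regular $\lambda$ the Schubert cells are its Bia{\l}ynicki--Birula (attracting) cells, which explains the paving by affines, though not by itself the sharper statement that each piece is an affine space.

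Part (b) is then a translation into tableaux. By construction $M^{Tym}=\sum_{(i|j)}E_{i,j}$, the sum over pairs of $Y_P^{Tym}$, is the matrix of $x$ on the basis $\widehat V$ for which $\hB$ is upper triangular; conjugating $E_{i,j}$ by the permutation matrix of $w$ produces $E_{w^{-1}(i),w^{-1}(j)}$, so $\Ad(\dot w^{-1})x$ is the matrix built in the same way from the diagram obtained by replacing each entry $k$ of $Y_P^{Tym}$ by $w^{-1}(k)$, namely from $w^{-1}(Y_P^{Tym})$. Hence $\Ad(\dot w^{-1})x$ lies in $\Lie\hB$ if and only if every pair $(i|j)$ of $w^{-1}(Y_P^{Tym})$ has $i<j$, and combining this with the criterion from (a) gives (b).

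The step I expect to be the main obstacle is the triangularity claim inside part (a). One has to pin down the correct total order on $\Phi_w$ together with an injection $\Phi_{w,x}\hookrightarrow\Phi_w$ assigning to each $\delta$ the coordinate it eliminates, and then verify both that in the associated equation that coordinate really occurs linearly with an invertible coefficient, and that the ``higher'' equations --- those indexed by obstruction roots outside $\Phi_x\cup\Phi_{w,x}$, which a priori could be nontrivial relations arising from longer brackets $\delta=\alpha_1+\alpha_2+\cdots+\gamma$ --- are automatically consequences of the equations already solved. This is precisely what upgrades ``the intersections are varieties of the expected dimension'' to ``the intersections are affine spaces,'' and what makes the count $|\Phi_w|-|\Phi_{w,x}|$ exact rather than merely an upper bound; once it is in place, the rest, including part (b), is bookkeeping.
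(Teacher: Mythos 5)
The paper does not actually prove this statement: it is quoted verbatim (after a change of Borel and basis arranged in the preceding paragraphs) from Tymoczko's Theorems~22 and~24, specialized to the Hessenberg space $\Lie\hB$, so there is no in-paper argument to compare yours against. Your outline does follow the same strategy as Tymoczko's actual proof --- intersect $\Bx$ with Bruhat cells, detect nonemptiness by the constant term along roots of $\Phi_x$, and count dimensions via $\Phi_w$ and $\Phi_{w,x}$ --- and the nonemptiness half is essentially complete once one grants that $\Phi_x$ is non-overlapping (which is easy to check from $Y_P^{Tym}$: there are no pairs $(i|j)$, $(k|l)$ with $i\le k<l\le j$, exactly as the paper notes later in the proof of its Proposition on $\gamma^{-1}(\sA_x)$). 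The translation in (b) is also correct bookkeeping.

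However, there is a genuine gap, and it sits exactly where you say it does: the triangularity claim. Asserting that each $\delta\in\Phi_{w,x}$ ``contributes a single equation in which one coordinate $t_\alpha$ occurs linearly with an invertible coefficient'' presupposes a specific injection $\Phi_{w,x}\hookrightarrow\Phi_w$, a total order making the system triangular, a check that the designated linear coefficient is a nonzero structure constant rather than a signed sum that could vanish, and a proof that the equations indexed by obstruction roots outside $\Phi_x\cup\Phi_{w,x}$ (coming from longer brackets) are consequences of the ones already solved. None of this is carried out, and it is precisely the content of Tymoczko's Theorem~22, which she establishes not by an abstract height induction but by explicit row reduction of matrix representatives of the Schubert cells in type $A$; the fact that her argument is genuinely type-$A$-specific (her results for other classical types require separate, more restrictive hypotheses) is a warning that this step cannot be waved through on general root-combinatorial grounds. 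As written, your argument proves nonemptiness iff $\Ad(w^{-1})x\in\Lie\hB$ and gives $|\Phi_w|-|\Phi_{w,x}|$ only as an expected dimension, not the statement that each nonempty intersection is an affine space of exactly that dimension, which is what the paving claim and the later cohomology computations require.
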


We can now use our labelled Young diagrams $Y_P^{Tym}$ and $Y_P^{Std} = \w^{-1}(Y_P^{Tym})$ with the above theorem to find our affine space of maximal dimension in $\mu^{-1}(\overline{x})$.

\begin{lem}\label{affinelem}
$\sA_x$ is nonempty, and it is an affine space of dimension $\frac{1}{2}(\dim\sN-\dim\Ox)=\dim\mu^{-1}(\overline{x})$.
\end{lem}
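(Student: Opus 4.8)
The plan is to apply Tymoczko's Theorem \ref{Tymoczko} directly with the permutation $w = \w$, and then identify the resulting affine cell $\sA_x = \hB\w\hB \cap \mu^{-1}(x)$ as one of maximal dimension. First I would verify nonemptiness: by part (b) of Theorem \ref{Tymoczko}, the cell $\hB\w\hB$ meets $\Bx$ precisely when $\w^{-1}(Y_P^{Tym})$ has the property that $i < j$ for every pair $(i|j)$. But $\w^{-1}(Y_P^{Tym}) = Y_P^{Std}$ by the definition of $\w$, and in the standard row-filling labelling every pair $(i|j)$ satisfies $i < j$ automatically, since we fill rows left-to-right with increasing integers. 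So the cell is nonempty, and since $\sA_x$ is (by Tymoczko part (a)) an affine space, it only remains to compute its dimension and check that this equals $\dim\Bx$.

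Next I would compute $\dim\sA_x = |\Phi_\w| - |\Phi_{\w,x}|$ using the combinatorial description. The term $|\Phi_\w|$ is the length $\ell(\w)$, the number of inversions of $\w$ (equivalently the number of positive roots for $B$ sent to negative roots). The correction term $|\Phi_{\w,x}|$ counts those $\gamma \in \Phi_\w$ that can be written as $\gamma = \gamma' + \beta$ with $\gamma' \in \Phi_\w$ and $\beta \in \Phi_x$; here $\Phi_x$ is exactly the set of $\beta_{j-1} = \alpha_{i,j}$ for pairs $(i|j)$ in $Y_P^{Tym}$ (equivalently, the simple roots for $B'$ that are positive for $B$). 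I would translate both quantities into statements about the two labellings of $Y_P$: an inversion of $\w$ corresponds to a pair of boxes whose relative order is reversed between $Y_P^{Std}$ and $Y_P^{Tym}$, and the subtracted roots correspond to inversions that "telescope" through a pair in the Tymoczko diagram. The goal is to show the net count is $\tfrac12(\dim\sN - \dim\Ox)$, which by standard facts (e.g. $\dim\Bx = \tfrac12(\dim\sN - \dim\Ox)$ and $\dim\sN - \dim\Ox = 2\dim\Bx$) is precisely $\dim\Bx$.

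The main obstacle I expect is the bookkeeping in the dimension count, i.e.\ showing $|\Phi_\w| - |\Phi_{\w,x}| = \dim\Bx$. One clean route avoids computing $|\Phi_\w|$ and $|\Phi_{\w,x}|$ separately: since $\Bx$ is paved by the affine cells $\hB w\hB \cap \Bx$ over the admissible $w$ from part (b), and since $\dim\Bx$ equals the maximum of the cell dimensions, it suffices to show $\hB\w\hB \cap \Bx$ achieves this maximum — equivalently, that no admissible $w$ gives a strictly larger cell. For $Y_P^{Std}$, every row-pair $(i|j)$ has $i<j$ with $j = i+1$ read along rows, so the associated roots $\beta$ are simple for $B'$, which should make $\Phi_{\w,x}$ as small as possible relative to $\Phi_\w$ among all valid $w$; intuitively the standard labelling is the "least twisted" admissible permutation. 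I would make this precise by comparing $\w$ with an arbitrary admissible $w$ box-by-box and showing the difference $|\Phi_w| - |\Phi_{w,x}|$ is maximized at $w = \w$. Alternatively, one can simply compute $\ell(\w)$ and $|\Phi_{\w,x}|$ explicitly from the block structure $P = [d_r\ \cdots\ d_1]$ (the answer being a sum like $\sum_{i<k} \min(d_i,d_k)$ type expressions) and match it against the known formula $\dim\Bx = \sum_i (i-1)d_i$ — a finite, if slightly tedious, identity. I would present whichever of these is shortest.
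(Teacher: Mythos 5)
Your nonemptiness argument is exactly the paper's: $\w^{-1}(Y_P^{Tym}) = Y_P^{Std}$ has rows increasing left to right, so the condition in Theorem \ref{Tymoczko}(b) holds automatically. The problem is the dimension count, which is the real content of the lemma and which you leave unexecuted, offering two alternative routes. The first route --- arguing that $w=\w$ maximizes $|\Phi_w|-|\Phi_{w,x}|$ over all admissible $w$, so that the cell attains $\dim\Bx$, which is known classically to equal $\tfrac12(\dim\sN-\dim\Ox)$ --- has a genuine gap: the assertion that the standard labelling is the ``least twisted'' admissible permutation is precisely what would need proof, and you give no mechanism for the box-by-box comparison. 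This maximality is not obvious, and the paper does not argue this way.

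Your second route (explicit computation) is essentially the paper's, but you should not compute $\ell(\w)$ and $|\Phi_{\w,x}|$ separately; the paper's count is cleaner and worth carrying out. One characterizes membership of $\alpha_{i,j}$ in $\Phi_{\w}$ by the conditions $\row(i)>\row(j)$ and $\col(i)\geq\col(j)$ read off from $Y_P^{Tym}$, and membership in $\Phi_{\w,x}$ by $\row(i)>\row(j)$ and $\col(i)>\col(j)$. Since $\Phi_{\w,x}\subseteq\Phi_{\w}$, the difference $|\Phi_{\w}|-|\Phi_{\w,x}|$ is exactly the number of vertically aligned pairs of boxes, namely $\sum_{j}\binom{\height(j)}{2}$ summed over the columns of $Y_P$. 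Combining this with $\dim\Ox=(n+1)^2-\sum_j\height(j)^2$ and $\dim\sN=n(n+1)$ yields $\dim\sN-2\dim\sA_x=\dim\Ox$ by a two-line algebraic manipulation. As written, your proposal identifies the right ingredients but does not yet prove the lemma: either justify the maximality claim or perform this count (equivalently, verify $\sum_{i<k}\min(d_i,d_k)=\sum_j\binom{\height(j)}{2}$) rather than deferring it as ``tedious.''
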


\begin{proof}

To see that $\sA_x$ is nonempty, we will need only to note that $Y_P^{Std}$ is filled with labels that increase from left to right.  Thus, the condition in Theorem \ref{Tymoczko} (b) is always satisfied.   From the discussions in previous paragraphs, $\Phi_{\overline{x}} = \{\alpha_{i,j}|$ $(i|j)$ is a pair in $Y_P^{Tym}\}$.  Given our description of the positive roots of $B$, we can restate our definition of $\Phi_{\w}$ as $\alpha_{i,j}\in \Phi_{\w}$ for $i<j$ if and only if $\w^{-1}(i) > \w^{-1}(j)$.  Since we used our labellings of $Y_P$ to define $\w$, $\Phi_{\w}$ can be seen from $Y_P^{Tym}$.  Let us number the rows in $Y_P^{Tym}$ in increasing order from top to bottom and number the columns in increasing order from left to right.  For any label $i$ in $Y_P^{Tym}$, let $\row(i)$ be the row number of the row containing $i$ and $\col(i)$ be the column number of the column containing $i$.  Using this notation, $\alpha_{i,j}$ is in $\Phi_{\w}$ if and only if $\row(i) > \row(j)$ and $\col(i) \geq \col(j)$.  To see this, consider the box in $Y_P^{Tym}$ containing $i$.  All the numbers in the boxes in higher rows or columns are larger than $i$, but the opposite fact is true of that box in $Y_P^{Std}$.  We can also see $\Phi_{\w,\overline{x}}$ from $Y_P^{Tym}$. In particular, $\alpha_{i,j}$ is in $\Phi_{\w,\overline{x}}$ if and only if $\row(i) > \row(j)$ and $\col(i) > \col(j)$.  See Example \ref{ex2}.

Viewing $\Phi_{\w}$ and $\Phi_{\w,\overline{x}}$ in this manner allows us to translate the formula from Theorem \ref{Tymoczko}(a) into a statement about the number of blocks in the columns of $Y_P$.  More specifically, if we let $\height(j)$ denote the number of blocks (or height) in the $j$th column of $Y_P$, then $$|\Phi_{\w}| - |\Phi_{\w,\overline{x}}| = \sum_{j=1}^{d_r}\sum_{i=1}^{\height(j)} (\height(j) - i) $$
where $d_r$ is the largest part in the partition $P$ and thus the number of columns in $Y_P$.  From \cite{NilOrbBk} Corollary 6.1.4, we see that the dimension of $\Ox$ is $(n+1)^2-\sum_{j=1}^{d_r} \height(j)^2$.  Thus, we have 

\begin{align*}
\dim \sN - 2\dim \sA_x & = n(n+1) - 2 \sum_{j=1}^{d_r}\sum_{i=1}^{\height(j)} \left( \height(j) - i\right) \\
& = n^2 + n - 2\sum_{j=1}^{d_r} \left(\height (j)^2 - \left(\frac{\height (j)^2 + \height (j)}{2}\right)\right)\\
& = n^2 + n - \sum_{j=1}^{d_r} \height (j)^2 + \sum_{j=1}^{d_r} \height (j)\\
& = (n+1)^2 - \sum_{j=1}^{d_r} \height (j)^2 \\
&= \dim \Ox.\\
\end{align*}
Since, the dimension of $\sA_x$ is equal to half the codimension of the orbit $\Ox$, we see that $\sA_x$ is of maximal possible dimension.
\end{proof}

\begin{example}\label{ex2}
Suppose $x$ corresponds to the partition ${P = [3\ 3\ 1]}$. Then\\ 
\begin{center}
$Y_P^{Std}$ =  \ytableausetup{centertableaux} \begin{ytableau}
1 & 2 & 3  \\ 4 & 5 & 6 \\ 7
\end{ytableau}\ ,\ \ 
$Y_P^{Tym}$ =  \ytableausetup{centertableaux} \begin{ytableau}
3 & 5 & 7  \\ 2 & 4 & 6 \\ 1
\end{ytableau}\ ,
\end{center}

\noindent$\Phi_{\overline{x}} = \{\alpha_{2,4},\  \alpha_{3,5},\ \alpha_{4,6},\ \alpha_{5,7}\}$,\\
$\Phi_{\w} = \{\alpha_{1,2},\ \alpha_{1,3},\ \alpha_{1,4},\ \alpha_{1,5},\ \alpha_{1,6},\ \alpha_{1,7},\ \alpha_{2,3},\ \alpha_{2,5},\  \alpha_{2,7},\ \alpha_{4,5},\ \alpha_{4,7},\ \alpha_{6,7}\}$,\\
and $\Phi_{\w,\overline{x}} = \{\alpha_{1,4},\ \alpha_{1,6},\ \alpha_{1,5},\ \alpha_{1,7},\ \alpha_{2,5},\ \alpha_{2,7},\ \alpha_{4,7}\}$.\\

\noindent Thus, $\dim \sA_x = |\Phi_{\w}| - |\Phi_{\w,\overline{x}}| = 5$.  Notice that $\dim \Ox = 32$ and $\dim \sN = 42$ so this agrees with the above calculations.
\end{example}

\begin{prop}\label{coveringmap}
Graham's map $\gamma$ is a covering map over $\sA_x$ with fibers given by $Z(J_x)$.
\end{prop}

\begin{proof}
First, note that we can construct Graham's variety based on the choice of any Borel subgroup, so we will assume this has been done for the specific $B$ described at the start of this section based on our choice of $x$.  Let $\sO_{ad}(\tau)$ be the $T_{ad}$ orbit containing $x$ in $W_{ad}$.  From Proposition \ref{cover}, we've identified that $\gamma_\tau$ is a covering map over a specific subspace of $\sNt$, so that we need now only show that $\sA_x \subseteq G\times^{B} \ku(\tau)$. 

As mentioned before, we can view $\sA_x$ as contained in $\sNt$ since \[\sA_x= \{ (\overline{x},b\sigma B) \in \{\overline{x}\}\times \mathscr{B}\ | b\in B\mathrm{\ and\ } (b\sigma)^{-1}\cdot\overline{x}\in \Lie B\}.\] Thus, to see $\sA_x\subseteq G\times^{B} \ku(\tau)$, we need to see that $\sA_x$ is contained in the image of $G\times^{B} \ku(\tau)$ under the isomorphism $G\times^{B} \ku\rightarrow \sNt$ given by $(g,y)\mapsto (g\dot y,gB)$. Using this isomorphism, $\sA_x$ can be identified as  \[\{(\sigma \cdot b^{-1} \cdot \overline{x}, b\sigma  B) | b\in B\mathrm{\ and\ }\sigma^{-1}\cdot b^{-1}\overline{x} \in \ku \}.\] Then, we need to see that $\sigma^{-1}\cdot b^{-1}\cdot\overline{x}$ decomposes to a sum of root vectors where the simple root vectors correspond to $\tau$, which was determined by the simple root vectors in the decomposition of $x$.

Since $b\in B$, we know \[b^{-1}\cdot \overline{x} = \overline{x} + \sum_{\rho>\beta}c_{\rho}X_{\rho}\] where $\beta \in \Phi_{\overline{x}}$ and $\rho$ are positive roots.  Here, $\gamma > \beta$ means $\gamma - \beta$ is a sum of positive roots.  Then we have \[\sigma^{-1}\cdot b^{-1}\cdot\overline{x} = \sigma^{-1}\cdot \left(\overline{x} + \sum_{\rho>\beta}c_{\rho}X_{\rho}\right) = x + \sum_{\rho>\beta}c_{\rho}X_{\sigma^{-1}(\rho)}.\]
Now, we need to verify that $\sigma^{-1}(\rho)$ is not a simple root for $c_\rho\neq 0$.  Then, we would have that our $\sigma^{-1}\cdot b^{-1}\cdot\overline{x}$ decomposes as the sum of simple root vectors of $x$ and other positive, non-simple root vectors, thus giving us $\sA_x\subseteq G\times^{B} \ku(\tau)$. Since we know $\sigma^{-1}\cdot b^{-1}\cdot\overline{x}\in \ku$, we have that $\sigma^{-1} (\rho)$ must be a positive root for all $c_\rho\neq 0$.  We know $\rho - \beta$ is a sum of positive roots, and thus, \[\sigma^{-1} (\rho) = \sigma^{-1}(\beta) + \sum_\alpha \sigma^{-1}(\alpha) \] for some positive roots $\alpha$.  We know $\sigma^{-1}(\beta)$ is simple since $\beta$ appears in $\Phi_{\overline{x}} = \sigma(\Phi_x)$.  Also, since we know $\sigma^{-1}(\rho)$ is positive, we have $\sum_\alpha \sigma^{-1}(\alpha)$ is a sum of positive roots again. If $\sigma^{-1}(\rho)$ were simple, the equation above would give a decomposition of the simple root into the sum of a simple root and other positive roots, which is a contradiction.
\end{proof}

As mentioned at the start of this section, these propositions do not give a full understanding of Graham's fibers, or even of the irreducible components.  However, they do give us enough information that we can make statements in the perverse sheaf setting.

\section{Main Result}

\subsection{Borho and MacPherson's Results}

Let $\mathscr{P}(\sN)$ be the category of perverse sheaves on $\sN$ with respect to the stratification by $G$-orbits.  Here again, $\kg$ is a complex semisimple Lie algebra and $G$ is the simply-connected algebraic group associated to it.  Define $d_x := \dim \Bx$.  Let $\underline{\Qlb}_{\sNt}$ denote the constant $\ell$-adic sheaf on $\sNt$.  For any orbit $\Ox$, let $\mathscr{L}_{\varphi}$ be the local system corresponding to the representation $\varphi$ of $\pi_1(\Ox)$.

All the results in the following fact can be seen in \cite{Borho1983}, but they were first proven elsewhere.

\begin{fact}\label{Spr}
\begin{enumerate}[(a)]
\item The Springer resolution is proper and semismall.
\item $\sNt$ is smooth, thus rationally smooth.
\item $\dim \sNt = \dim \sN$.
\item $2d_x = \dim \sN - \dim \Ox$ for any $x \in \sN$.
\end{enumerate}
\end{fact}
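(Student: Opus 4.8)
The plan is to take the four assertions in order of increasing difficulty: (b), (c), the properness in (a), and then the semismallness in (a) together with (d), which are two halves of a single statement.

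First, (c) is a dimension count. Since $\sNt = G\times^B\ku$ is the total space of a $G$-equivariant vector bundle over $G/B$ with fibre $\ku$ (the nilradical of $\Lie B$), we get $\dim\sNt = \dim G/B + \dim\ku = |\Phi^+| + |\Phi^+| = |\Phi|$, writing $\Phi$ for the root system and $\Phi^+$ for the positive roots determined by $B$. On the other side, $\sN$ is the closure of the regular nilpotent orbit, which has dimension $\dim\kg - \dim\kt = |\Phi|$; hence $\dim\sNt = \dim\sN$. The same bundle picture settles (b): $\sNt$ is a vector bundle over the smooth projective variety $G/B$, hence smooth, and smooth varieties are rationally smooth.

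Next, for the properness in (a) I would factor the Springer resolution as
$$\sNt = G\times^B\ku \;\hookrightarrow\; G\times^B\kg \;\cong\; G/B\times\kg \;\xrightarrow{\ \mathrm{pr}_2\ }\; \kg,$$
where the first arrow is the closed immersion induced by $\ku\hookrightarrow\kg$, the isomorphism sends the class of $(g,x)$ to $(gB,\Ad(g)x)$, and $\mathrm{pr}_2$ is the second projection. Closed immersions are proper, $\mathrm{pr}_2$ is proper because $G/B$ is complete, and a composite of proper morphisms is proper; by construction its image is $\sN$. (Equivalently, $\mu$ is projective, realizing $\sNt$ as a closed subvariety of $G/B\times\kg$.)

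There remain the semismallness in (a) and the fibre-dimension formula (d), and I regard (d) as the crux. Once (c) is known, the semismallness estimate $\dim\mu^{-1}(x)\le\tfrac12(\dim\sNt-\dim\Ox)$ for $x\in\Ox$ is exactly the inequality $\dim\Bx\le\tfrac12(\dim\sN-\dim\Ox)$, and the equality in (d) is the assertion that every orbit is ``relevant''. For the upper bound I would pass to the Steinberg variety $\sNt\times_\sN\sNt$: the $G$-orbits on $G/B\times G/B$ are the relative-position strata $\mathcal{O}_w$, $w\in\W$, and over $\mathcal{O}_w$ the Steinberg variety is a vector bundle with fibre $\Ad(g_1)\ku\cap\Ad(g_2)\ku$ (an intersection of two nilradicals in position $w$) of dimension $|\Phi^+|-\ell(w)$; since $\dim\mathcal{O}_w = |\Phi^+|+\ell(w)$, every such stratum has dimension exactly $2|\Phi^+| = \dim\sNt$. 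So $\sNt\times_\sN\sNt$ is equidimensional of dimension $\dim\sNt$, which by the standard criterion (the fibre product having the same dimension as the smooth source) is equivalent to $\mu$ being semismall. For the matching lower bound I would invoke the classical fact --- again read off from the Steinberg variety, whose irreducible components are indexed by $\W$, together with the fact that every nilpotent orbit occurs in the Springer correspondence --- that each $\Ox$ is relevant, i.e. the part of the Steinberg variety lying over $\Ox$ attains the full dimension $\dim\sN = \dim\Ox + 2\dim\Bx$. I expect this last point --- relevance of every orbit, equivalently the lower bound $\dim\Bx\ge\tfrac12(\dim\sN-\dim\Ox)$ and the equidimensionality of Springer fibres (Spaltenstein, Steinberg) --- to be the genuine obstacle; everything else is formal. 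In the write-up the honest course is to carry out the bundle and dimension computations above and otherwise cite \cite{Borho1983} and the references collected there.
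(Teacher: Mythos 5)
Your proposal is correct, but the comparison here is lopsided: the paper offers no proof of this statement at all --- it is recorded as a \emph{Fact}, prefaced by the remark that all four assertions ``can be seen in \cite{Borho1983}, but they were first proven elsewhere.'' What you supply is the standard modern argument that the citation stands in for: the realization of $\sNt$ as a vector bundle over $G/B$ gives (b) and (c) at once, the closed embedding $G\times^B\ku\hookrightarrow G/B\times\kg$ followed by the proper projection gives the properness in (a), and the Steinberg-variety count (each stratum over $\mathcal{O}_w$ having dimension $|\Phi^+|+\ell(w)+|\Phi^+|-\ell(w)=\dim\sNt$) gives the semismall inequality $\dim\Bx\le\frac{1}{2}(\dim\sN-\dim\Ox)$. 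You also correctly isolate the one genuinely non-formal ingredient, the matching lower bound in (d), which is the Spaltenstein--Steinberg equidimensionality theorem for Springer fibres; proving the formal parts and citing for that is more informative than the paper's treatment and entirely appropriate. One caution: do not justify the lower bound by appealing to ``every nilpotent orbit occurs in the Springer correspondence'' --- in the Borho--MacPherson framework that statement is \emph{deduced} from the relevance of every stratum, i.e.\ from (d) itself, so that route is circular; rely on Spaltenstein and Steinberg directly.
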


The above facts tell us that the Springer resolution satisfies all the necessary conditions to apply the Decomposition Theorem for perverse sheaves from \cite{BBD}; see also \cite{Borho1983}.   Thus, we know that $R\mu_*\underline{\Qlb}_{\sNt}$ is semisimple in $\mathscr{P}(\sN)$.  Each simple perverse sheaf that occurs as a summand corresponds to a local system $\mathscr{L}_{\varphi}$ on an orbit $\Ox$.  Here, $\varphi$ is an irreducible representation of the fundamental group $\pi_1(\Ox)$, and the $\varphi$ which appear are exactly those which occur in the action of $\pi_1(\Ox)$ on $H^{2d_x}(\Bx)$.  In \cite{Borho1983}, Borho and MacPherson use this in their proof of the Springer correspondence by noting that the local systems which appear  same ones previously constructed by Springer, and furthermore, that the multiplicity with which they appear is the same as the dimension of the representation of the Weyl group to which they correspond.  This means that in type $A_n$, the trivial local system, and only the trivial local system, appears for each orbit.  We will see below that more local systems occur for Graham's variety.

\begin{remark}
Although Borho and MacPherson state in \cite{Borho1983} that the monodromy representation of $\pi_1(\Ox)$ on $H^{2d_x}(\Bx)$ is the same up to multiplication by the sign representation as the action of the component group given by Springer in \cite{Springer1976}, their argument is not very explicit.  The discussion by Jantzen in \cite{Jantzen2004} is more thorough, but rather than argue directly about the actions, Jantzen focuses on how the local systems appearing are $G$-equivariant and can be determined by the action of the component group $G^x/(G^x)^\circ$ on $H^{2d_x}(\Bx)$.  As $\pi_1(\Ox)$ is viewed as the component group in the whole of this paper, we will follow the same approach as Jantzen. 
\end{remark}

\subsection{Results for Graham's Variety}

\begin{prop}\label{decomp}
The map $\tilde{\mu}$ is proper and semismall.  $\sMt$ is rationally smooth with $\dim \sMt=\dim \sN$.  Furthermore, $2d_x= \dim \sN - \dim \Ox$ for any $x\in \sN$.
\end{prop}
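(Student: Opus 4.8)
The plan is to deduce each assertion about $\tilde\mu = \mu\circ\gamma$ from the corresponding fact about the Springer resolution $\mu$ (Fact \ref{Spr}) together with the fact that $\gamma$ is a finite surjective morphism. First I would record that $\gamma:\sMt\to\sNt$ is finite: by construction $\sMt = G\times^B\tilde\ku$ with $\tilde\ku = W\times_{W_{ad}}\ku$, and the projection $\tilde\ku\to\ku$ has fibers equal to the fibers of $\rho:W\to W_{ad}$, which by Theorem \ref{ZJ} are finite groups $Z(J)$. Hence $\gamma$ is a finite morphism; it is surjective because $\rho$ is. Since $\mu$ is proper by Fact \ref{Spr}(a) and $\gamma$ is finite (hence proper), the composition $\tilde\mu$ is proper. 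For the dimension statement, finiteness of $\gamma$ gives $\dim\sMt = \dim\sNt$, and $\dim\sNt=\dim\sN$ by Fact \ref{Spr}(c), so $\dim\sMt=\dim\sN$; the equality $2d_x = \dim\sN-\dim\Ox$ is literally Fact \ref{Spr}(d) and involves only $\sN$ and the Springer fiber $\Bx$, so it carries over verbatim.

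Next I would address rational smoothness of $\sMt$. The cleanest route is to argue locally: $\sMt$ is a $G$-bundle over $B$ with fiber $\tilde\ku$, so it suffices to check that $\tilde\ku$ is rationally smooth, and since $\tilde\ku = W\times_{W_{ad}}\ku$ and $\ku$ is an affine space, it suffices to check that $W$ is rationally smooth. But $W$ is the toric variety built from the character lattice of $T$ with cone generated by the fundamental weights, and $W/Z = W_{ad}$ where $W_{ad}$ is the analogous (smooth, since simplicial with a basis) toric variety for $T_{ad}$; thus $W$ is the quotient of a smooth variety by the finite group $Z$. A finite quotient of a smooth variety is rationally smooth (it is a $\Qlb$-homology manifold, equivalently a rational homology manifold), so $W$, hence $\tilde\ku$, hence $\sMt$, is rationally smooth. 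Alternatively one can invoke that $\gamma$ is a finite surjective map from a variety onto the rationally smooth $\sNt$ — but that direction is the wrong way, so the toric-quotient argument is the one I would use.

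It remains to prove that $\tilde\mu$ is semismall. Semismallness of $\mu$ (Fact \ref{Spr}(a)) means $\dim\mu^{-1}(x)\le\frac12(\dim\sN-\dim\Ox)$ for all $x$ (with equality on the relevant strata); since $\gamma$ has finite fibers, $\dim\tilde\mu^{-1}(x) = \dim\gamma^{-1}(\mu^{-1}(x)) = \dim\mu^{-1}(x)$, so the dimension inequality defining semismallness is inherited immediately, stratum by stratum, using the $G$-orbit stratification of $\sN$. I expect this step to be essentially routine once finiteness of $\gamma$ is in hand; the main obstacle, and the only genuinely substantive point, is establishing rational smoothness of $\sMt$ — specifically the passage from "$W$ is a finite quotient of a smooth toric variety" to "$W$ is rationally smooth" — together with being careful that the $G\times^B(-)$ construction and the base change along $W_{ad}$ preserve this property. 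Once rational smoothness and finiteness of $\gamma$ are secured, all four assertions follow formally from Fact \ref{Spr}.
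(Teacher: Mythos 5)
Your overall strategy is the same as the paper's: everything except rational smoothness does follow formally from the finiteness of $\gamma$ together with Fact \ref{Spr}, and your treatment of properness, the dimension equalities, and semismallness is fine. The paper handles the remaining point by citing Graham's result that $\sMt$ is an orbifold and Brion's result that orbifolds are rationally smooth.

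The genuine gap is in your argument for rational smoothness, which you yourself flag as the only substantive step. You write that since $W/Z = W_{ad}$ with $W_{ad}$ smooth, ``$W$ is the quotient of a smooth variety by the finite group $Z$.'' This reverses the quotient: the relation $W/Z = W_{ad}$ says that the \emph{smooth} variety $W_{ad} = \ku/[\ku,\ku] \cong \mathbb{A}^n$ is the quotient of $W$ by $Z$, and $W$ sits upstairs as the (generally singular) finite cover --- indeed Theorem \ref{ZJ} computes the fibers of $\rho\colon W \to W_{ad}$, which would all be trivial if $W$ were itself a quotient of $W_{ad}$. Knowing that a variety admits a finite surjection \emph{onto} a smooth variety gives no control on its local homology (a nodal curve maps finitely onto $\mathbb{A}^1$), so your inference does not establish that $W$ is rationally smooth. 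The conclusion you want is nevertheless true, but it needs a different input: either cite Graham's theorem that $\sMt$ is an orbifold, or use the standard toric fact that the affine toric variety of a simplicial cone whose ray generators span a finite-index sublattice $N'$ of the ambient lattice $N$ is the quotient of $\mathbb{A}^n$ by the finite abelian group $N/N'$ (a group which is in general not $Z$). Either of these makes $W$, hence $\tilde\ku = W\times_{W_{ad}}\ku$, hence $\sMt = G\times^B\tilde\ku$, a rational homology manifold, and the rest of your argument then goes through.
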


\begin{proof}
According to \cite{Graham}, $\gamma$ is finite and $\widetilde{\mathscr{M}}$ is locally the quotient of a smooth variety by a finite group.  Thus, $\sMt$ is rationally smooth \cite{Brion1999}, and all of the properties except rational smoothness follow from the finiteness of $\gamma$ and Fact \ref{Spr}. 
\end{proof}

Now, the results from Section 3.2 are enough to make the following useful proposition: 

\begin{prop}\label{regrep}
For $\kg$ in type $A_n$, $H^{2d_x}(\Mx)$ contains a copy of the regular representation of $\pi_1(\Ox)$.
\end{prop}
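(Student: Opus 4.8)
The plan is to exploit the covering-map structure established in the previous section: by the proposition immediately above, $\gamma^{-1}(\sA_x)$ is a disjoint union of $|\pi_1(\Ox)|$ copies of the maximal-dimensional affine space $\sA_x \subset \Bx$, and $\tilde{\mu} = \mu\circ\gamma$, so $\Mx = \gamma^{-1}(\Bx)$ contains $|\pi_1(\Ox)|$ disjoint copies of $\sA_x$, each of which is an affine space of dimension $d_x$. First I would recall that $\pi_1(\Ox)\cong Z(J)$ acts on $\Mx$ as the deck group of $\gamma$ over (an open subset of) $\Bx$, and that this action permutes the $|\pi_1(\Ox)|$ copies of $\sA_x$ simply transitively — this is what it means for $\gamma$ to be a covering map with fiber $\pi_1(\Ox)$ over $\sA_x$, since $\sA_x$ is simply connected (it is an affine space).

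Next I would pass to cohomology. Since each copy of $\sA_x$ is a closed affine cell of top dimension $d_x$ inside $\Mx$, each contributes an independent class in $H^{2d_x}_c$, dually in $H^{2d_x}$; concretely, one uses that $\Bx$ has an affine paving (by Tymoczko's theorem) in which $\sA_x$ is one of the top cells, and $\gamma$ restricted to the preimage of this paving refines it to an affine paving of $\Mx$ in which the $|\pi_1(\Ox)|$ copies of $\sA_x$ are exactly the top-dimensional cells lying over $\sA_x$. A standard argument for affine pavings then shows that the fundamental classes of the top cells form a basis for $H^{2d_x}$, and in particular that the span $V$ of the classes of these $|\pi_1(\Ox)|$ copies is a free $\Qlb[\pi_1(\Ox)]$-submodule of $H^{2d_x}(\Mx)$ of rank one: the deck group permutes the cells simply transitively, hence permutes their fundamental classes simply transitively (up to signs coming from orientation, which are trivial here since everything is complex-analytic, so the cells are canonically oriented and the action is by honest permutation). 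A rank-one free $\Qlb[\pi_1(\Ox)]$-module is precisely the regular representation, so $H^{2d_x}(\Mx)$ contains a copy of it.

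The main obstacle, and the step I would be most careful about, is justifying that the classes of the $|\pi_1(\Ox)|$ copies of $\sA_x$ remain linearly independent inside $H^{2d_x}(\Mx)$ and that the $\pi_1(\Ox)$-action on them is the honest permutation action with no twist. Linear independence requires knowing that $\Mx$ admits an affine paving with these as cells — the previous section only analyzes $\gamma$ over $\sA_x$, not over all of $\Bx$ — so I would argue that the preimage under $\gamma$ of Tymoczko's paving of $\Bx$ is again a paving by affines (pulling back each cell through the covering/ramified-covering $\gamma$; over lower cells the fibers may be smaller groups $Z(J')$ but this only affects cells of strictly smaller dimension, which do not contribute to $H^{2d_x}$), and then invoke the standard fact that for a variety with an affine paving the top-degree cohomology is freely spanned by the top cells. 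The equivariance of the action on cells is then automatic because $\pi_1(\Ox)$ acts by permuting the fibers of $\gamma$ and hence permuting cells that lie in a single fiber. Once these two points are nailed down, the identification of $V$ with the regular representation is immediate.
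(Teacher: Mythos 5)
Your geometric input is the same as the paper's---the $|\pi_1(\Ox)|$ disjoint copies of $\sA_x$ inside $\gamma^{-1}(\sA_x)$, permuted simply transitively by $\pi_1(\Ox)$---but the cohomological step is where your argument has a genuine gap. You propose to obtain linear independence of the corresponding classes in $H^{2d_x}(\Mx)$ by showing that the preimage under $\gamma$ of Tymoczko's paving of $\Bx$ is again an affine paving of $\Mx$. Nothing in Section 3 establishes this, and it is precisely the delicate point: for $\gamma^{-1}$ of a cell to be a disjoint union of copies of that cell, the decomposition type of $x$ as a sum of simple root vectors must be constant along the cell, and the paper proves this only for the single cell $\sA_x = \hB\w\hB\cap\mu^{-1}(x)$, using the non-overlapping property of $\Phi_x$. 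No analogous statement is available for the other cells (the paper explicitly remarks that these propositions ``do not give a full understanding of Graham's fibers''). Moreover, your claim that the remaining cells have strictly smaller dimension is false: in type $A_n$ the Springer fiber $\Bx$ is equidimensional, with top-dimensional components indexed by the standard Young tableaux of shape $P$, so Tymoczko's paving contains many cells of dimension $d_x$ besides $\sA_x$, and a paving argument would require controlling $\gamma$ over all of them.

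The paper sidesteps all of this with a more economical step: since $\dim_{\C}\Mx = d_x$, the top-degree cohomology $H^{2d_x}(\Mx)$ has a basis indexed by the irreducible components of $\Mx$ of maximal dimension---a standard fact that requires no affine paving. The closures of the $|\pi_1(\Ox)|$ copies of $\sA_x$ are such components (each copy is an affine space of the maximal dimension $d_x$), they are distinct, and $\pi_1(\Ox)$ permutes them freely and transitively; hence the span of their classes is already a copy of the regular representation. If you replace your paving argument with this appeal to irreducible components, the rest of your proof goes through unchanged.
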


\begin{proof}
There is a basis of $H^{2d_x}(\Mx)$ indexed by irreducible components of maximal dimension.  Therefore, determining how $\pi_1(\Ox)$ acts on the components is sufficient to understand how it acts on the cohomology.  We know $\pi_1(\Ox) = G^x/(G^x)^\circ$ acts on these components through the same argument that holds for Springer fibers.  That is, $G$ acts on $\sMt$, and thus $G^x$ acts on the fiber $\Mx$.  Since the identity component $(G^x)^\circ$ is irreducible and connected, so are the orbits from the action of this subgroup.  Thus, the irreducible components of $\Mx$ are preserved by the action of $G^x/(G^x)^\circ$, and we know $\pi_1(\Ox)$ must act by permuting the irreducible components of $\Mx$.  Now, the goal is to understand as much as possible about this action.   

From Corollary \ref{zpi} and Proposition \ref{coveringmap}, we know that $\gamma$ is a covering map over $\sA_x$ with  fibers given by $\pi_1(\Ox)$.  We further know that $\sA_x$ is simply-connected since Lemma \ref{affinelem} says it is an affine space.  Therefore, $\gamma^{-1}(\sA_x)$ must be a disjoint union of ``copies'' of $\sA_x$. Since $\dim \sA_x$ is maximal in $\Bx$, it must be the case that the closure of $\sA_x$ is an irreducible component of maximal dimension in $\Bx$, and similarly, the closure of each copy of $\sA_x$ found in $\gamma^{-1}(\sA_x)$ must be an irreducible component of maximal dimension in $\Mx$.  Consider the fiber over any point in $\sA_x$.  This is a finite set and $Z(J_x)$ acts freely and transitively on the points in the fiber.  This action is given by left multiplication and must agree with the left multiplication action of $\pi_1(\Ox)$ since we saw in the proof of Proposition \ref{fundprop1} that each coset in $G^x/(G^x)^\circ$ can be represented with an element of $Z$.  Thus, we know that $\pi_1(\Ox)$ acts freely and transitively on these points, so also on the copies of $\sA_x$ contained in $\Mx$.  Therefore, it acts freely and transitively on the irreducible components contained in $\overline{\gamma^{-1}(\sA_x)}$.  We see then that there must be a copy of the regular representation of $\pi_1(\Ox)$ contained in its action on $H^{2d_x}(\Mx)$.
\end{proof}

With these propositions in place, we can now state our main result. 

\begin{thm}
If $\kg$ is a Lie algebra of type $A_n$,  then every $G$-equivariant simple perverse sheaf occurs as a summand in the decomposition of $R\mu_*\underline{\Qlb}_{\sMt}$.
\end{thm}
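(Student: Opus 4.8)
The plan is to derive the theorem from the two preparatory propositions together with the Borho--MacPherson analysis of a semismall map. First I would recall the classification of simple objects: with respect to the stratification of $\sN$ by $G$-orbits, the simple perverse sheaves are the intersection cohomology complexes $\IC(\overline{\Ox},\mathscr{L}_\varphi)$ with $\Ox$ a nilpotent orbit and $\varphi$ an irreducible representation of $\pi_1(\Ox)$, and in type $A_n$ --- where $G=SL_{n+1}$ is simply connected, so $\pi_1(\Ox)\cong\pi_0(Z_G(x))$ and every local system on $\Ox$ is $G$-equivariant, with $\pi_1(\Ox)\cong\Z/c\Z$ for $c$ the gcd of the parts of the partition of $x$ --- these exhaust the $G$-equivariant simple perverse sheaves. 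So the task reduces to showing that for every orbit $\Ox$ and every character $\varphi$ of $\pi_1(\Ox)$, the complex $\IC(\overline{\Ox},\mathscr{L}_\varphi)$ appears as a direct summand of $R\tilde{\mu}_*\underline{\Qlb}_{\sMt}$.

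Next I would invoke Proposition \ref{decomp}: $\tilde{\mu}$ is proper and semismall, $\sMt$ is rationally smooth of dimension $\dim\sN$, and $2d_x=\dim\sN-\dim\Ox$. These are exactly the hypotheses under which the Decomposition Theorem of \cite{BBD} yields that $R\tilde{\mu}_*\underline{\Qlb}_{\sMt}$ is, after the usual shift, a semisimple perverse sheaf, and its summands are automatically $G$-equivariant since $\tilde{\mu}$ is a $G$-equivariant morphism. I would then run, essentially verbatim, the argument Borho and MacPherson use for the Springer resolution --- which applies to any proper semismall map from a rationally smooth variety --- to identify the multiplicity of $\IC(\overline{\Ox},\mathscr{L}_\varphi)$ in $R\tilde{\mu}_*\underline{\Qlb}_{\sMt}$ with $\Hom_{\pi_1(\Ox)}(\varphi,\, H^{2d_x}(\Mx))$, where $\pi_1(\Ox)$ acts on $H^{2d_x}(\Mx)$ by monodromy on the top-degree cohomology of the fibers $\Mx=\tilde{\mu}^{-1}(x)$ (note $\dim\Mx=d_x$ because $\gamma$ is finite, so $\Mx$ and the Springer fiber $\Bx$ have the same dimension). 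Thus $\IC(\overline{\Ox},\mathscr{L}_\varphi)$ occurs as a summand if and only if $\varphi$ occurs in the $\pi_1(\Ox)$-module $H^{2d_x}(\Mx)$.

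Finally, Proposition \ref{regrep} says $H^{2d_x}(\Mx)$ contains a copy of the regular representation of $\pi_1(\Ox)$, and since $\pi_1(\Ox)\cong\Z/c\Z$ is finite abelian, every irreducible character occurs in its regular representation; hence every $\varphi$ occurs in $H^{2d_x}(\Mx)$, and every $\IC(\overline{\Ox},\mathscr{L}_\varphi)$ is a summand of $R\tilde{\mu}_*\underline{\Qlb}_{\sMt}$. Since all nilpotent orbits lie in the image of $\tilde{\mu}=\mu\circ\gamma$, this covers every $G$-equivariant simple perverse sheaf on $\sN$, proving the theorem. Given Propositions \ref{decomp} and \ref{regrep}, the remaining work is mostly decomposition-theorem bookkeeping, so I expect the one genuinely delicate point to be checking that the Borho--MacPherson mechanism transfers cleanly from $\mu$ to $\tilde{\mu}$ --- that rational smoothness of $\sMt$ in place of smoothness of $\sNt$, together with semismallness, still forces the summands to be governed entirely by the monodromy on the top cohomology of the fibers over each orbit, with no contribution from deeper strata --- and, relatedly, that the monodromy action appearing here is the same $\pi_1(\Ox)$-action on $\Mx$ that is used in Proposition \ref{regrep}.
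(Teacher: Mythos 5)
Your proposal is correct and follows essentially the same route as the paper: apply the Decomposition Theorem via Proposition \ref{decomp}, use semismallness to reduce the occurrence of $\IC(\Ox,\mathscr{L}_\varphi)$ to whether $\varphi$ appears in the $\pi_1(\Ox)$-action on $H^{2d_x}(\Mx)$, and conclude from the copy of the regular representation supplied by Proposition \ref{regrep}. The extra bookkeeping you include (classifying the $G$-equivariant simple objects and noting that every orbit is in the image of $\tilde{\mu}$) is consistent with, and slightly more explicit than, the paper's own argument.
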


\begin{proof}
From Proposition \ref{decomp}, we know that the Decomposition Theorem applies.  Then, since $\tilde{\mu}$ is semismall, the Decomposition Theorem tells us that $\IC (\Ox, \mathscr{L}_{\varphi})$ occurs in $R\mu_*\underline{\Qlb}_{\sMt}$ if and only if the representation $\varphi$ occurs in the action of  $\pi_1(\Ox)$ on $H^{2d_x}(\Mx)$.  From Proposition \ref{regrep}, we see that all the irreducible representations appear in this action.  The result follows.

\end{proof}

\section{Appendix: Exceptional Types $E_6$ and $E_7$}

The Bala-Carter classification was used to determine the containment of $T_{ad}$-orbits inside $G$-orbits for the exceptional Lie algebras $E_6$ and $E_7$.  In the case of $E_7$, it was also necessary to calculate the weighted Dynkin diagrams when the same type of Levi subalgebra labels more than one orbit. 

The following tables do not include the $G$-orbits which contain no $T_{ad}$-orbits.  For any orbit $\sO$, the $G_{ad}$-equivariant fundamental group $A(\sO)$ can be determined from the $\pi_1(\sO)$ column by taking the quotient by any $\Z/2\Z$ or $\Z/3\Z$ factor present.  The values for $\pi_1(\sO)$ and $A(\sO)$ come from \cite{Alexeevski2005} and \cite{Mizuno80}.  They can also be found in \cite{NilOrbBk} Section 8.4, but with some errors in the results for $E_7$.  

\begin{table}[h]
\begin{center}
\begin{tabular}{c p{2.5in} cc}
\hline
Bala--Carter \T \B & Subsets $J$ corresponding to $T_{ad}$-orbits & $Z(\mathscr{O})$ & $\pi_1(\mathscr{O})$\\
\hline\hline
Triv. \T \B & $\{1,2,3,4,5,6\}$ & 1 & 1\\
\hline
$A_1$ \T & $\{1,2,3,4,5\}$, $\{1,2,3,4,6\}$, $\{1,2,3,5,6\}$,& 1 & 1\\
 \B  & $\{1,2,4,5,6\}$, $\{1,3,4,5,6\}$, $\{2,3,4,5,6\}$ & & \\
      \hline
$2A_1$ \T & $\{1,2,3,5\}$, $\{1,2,4,5\}$, $\{1,2,4,6\}$, & 1 & 1\\
 & $\{1,2,5,6\}$, $\{1,3,4,5\}$, $\{1,3,4,6\}$, & & \\
 &  $\{1,4,5,6\}$, $\{2,3,4,5\}$, $\{2,3,4,6\}$, & & \\
\B &  $\{2,3,5,6\}$, $\{3,4,5,6\}$ & & \\
    \hline
$3A_1$ \T  & $\{1,4,5\}$, $\{1,4,6\}$, $\{2,3,5\}$, & 1 & 1\\
\B &  $\{3,4,5\}$, $\{3,4,6\}$ & & \\
\hline
$A_2$ \T  & $\{1,2,3,4\}$, $\{1,2,3,6\}$, $\{1,3,5,6\}$,  & 1 & $S_2$\\
\B & $\{2,4,5,6\}$  & & \\
 \hline
$A_2 + A_1$ \T & $\{1,2,4\}$, $\{1,2,5\}$, $\{1,3,4\}$, $\{1,3,5\}$, & 1 & 1 \\
 & $\{2,3,4\}$, $\{2,3,6\}$, $\{2,4,5\}$, $\{2,4,6\}$,  & & \\
\B & $\{3,5,6\}$, $\{4,5,6\}$ & & \\
    \hline
$2A_2$  \T \B & $\{2,4\}$ & $\Z/3\Z$ & $\Z/3\Z$ \\
\hline
$A_2 + 2A_1$ \T \B & $\{1,4\}$, $\{3,4\}$, $\{3,5\}$, $\{4,5\}$, $\{4,6\}$ & 1 &1\\
\hline
$A_3$ \T & $\{1,2,3\}$, $\{1,2,6\}$, $\{1,3,6\}$, & 1 & 1\\
\B & $\{1,5,6\}$, $\{2,5,6\}$ & & \\
\hline
$2A_2 + A_1$ \T \B & $\{4\}$ & $\Z/3\Z$ & $\Z/3\Z$\\
\hline
$A_3 + A_1$ \T \B & $\{1,5\}$, $\{2,3\}$, $\{2,5\}$, $\{3,6\}$ & 1 & 1\\
\hline
$A_4$ \T \B & $\{1,2\}$, $\{1,3\}$, $\{2,6\}$, $\{5,6\}$ & 1 & 1\\
\hline
$D_4$ \T \B & $\{1,6\}$ & 1 & 1\\
\hline
$A_4 + A_1$ \T \B & $\{3\}$, $\{5\}$ & 1 & 1\\
\hline
$A_5$ \T \B & $\{2\}$ & $\Z/3\Z$ & $\Z/3\Z$\\
\hline
$D_5$ \T \B & $\{1\}$, $\{6\}$ & 1 & 1\\
\hline
$E_6$ \T \B & $\varnothing$ & $\Z/3\Z$ & $\Z/3\Z$\\
\hline\hline
\end{tabular}
\end{center}
\caption{$T_{ad}$-orbits and Fundamental Groups for $E_6$}
\label{tab:E6data}
\end{table}

\clearpage

\begin{table}[ht]
\begin{center}
\begin{tabular}{c p{2.5in} cc}
\hline
Bala--Carter  \T \B & Subsets $J$ corresponding to $T_{ad}$-orbits & $Z(\mathscr{O})$ & $\pi_1(\mathscr{O})$\\
\hline\hline
Triv. \T \B & $\{1,2,3,4,5,6,7\}$ & 1 & 1\\
\hline
$A_1$ \T & $\{1,2,3,4,5,6\}$, $\{1,2,3,4,5,7\}$, & 1 & 1\\
  & $\{1,2,3,4,6,7\}$, $\{1,2,3,5,6,7\}$, & & \\
  & $\{1,2,4,5,6,7\}$, $\{1,3,4,5,6,7\}$, & & \\
   \B   & $\{2,3,4,5,6,7\}$ & & \\
      \hline
$2A_1$ \T & $\{1,2,3,4,6\}$, $\{1,2,3,5,6\}$, $\{1,2,3,5,7\}$, & 1 & 1\\
 & $\{1,2,4,5,6\}$, $\{1,2,4,5,7\}$, $\{1,2,4,6,7\}$, & & \\
  & $\{1,3,4,5,6\}$, $\{1,3,4,5,7\}$, $\{1,3,4,6,7\}$, & & \\
   & $\{1,4,5,6,7\}$, $\{2,3,4,5,6\}$, $\{2,3,4,5,7\}$, & & \\
 \B   & $\{2,3,4,6,7\}$, $\{2,4,5,6,7\}$, $\{3,4,5,6,7\}$ & & \\
    \hline
$(3A_1)''$ \T \B & $\{1,3,4,6\}$ & $\mathbb{Z}/2\mathbb{Z}$ & $\mathbb{Z}/2\mathbb{Z}$\\
\hline
$(3A_1)'$ \T & $\{1,2,4,6\}$, $\{1,4,5,6\}$, $\{1,4,5,7\}$, & 1 & 1\\
 & $\{1,4,6,7\}$, $\{2,3,4,6\}$, $\{2,3,5,6\}$, & &\\
 & $\{2,3,5,7\}$, $\{3,4,5,6\}$, $\{3,4,5,7\}$, & &\\
 \B & $\{3,4,6,7\}$ & & \\
  \hline
$A_2$ \T & $\{1,2,3,4,5\}$, $\{1,2,3,4,7\}$, $\{1,2,3,6,7\}$, & 1 & $S_2$\\
 \B & $\{1,2,5,6,7\}$, $\{1,3,5,6,7\}$ & &\\
 \hline
$4A_1$ \T \B & $\{1,4,6\}$, $\{3,4,6\}$ & $\mathbb{Z}/2\mathbb{Z}$ & $\mathbb{Z}/2\mathbb{Z}$ \\
\hline
$A_2 + A_1$ \T & $\{1,2,3,5\}$, $\{1,2,3,6\}$, $\{1,2,4,5\}$, & 1 & $S_2$ \\
 & $\{1,2,4,7\}$, $\{1,2,5,6\}$, $\{1,2,5,7\}$, & & \\
  & $\{1,3,4,5\}$, $\{1,3,4,7\}$, $\{1,3,5,6\}$, & & \\
  & $\{1,3,5,7\}$, $\{1,5,6,7\}$, $\{2,3,4,5\}$, & & \\
   & $\{2,3,4,7\}$, $\{2,3,6,7\}$, $\{2,4,5,6\}$, & & \\
   & $\{2,4,5,7\}$, $\{2,4,6,7\}$, $\{3,4,6,7\}$, & & \\
  \B & $\{4,5,6,7\}$ & & \\
    \hline
$A_2 + 2A_1$ \T & $\{1,4,5\}$, $\{1,4,7\}$, $\{2,3,5\}$, $\{2,3,6\}$, & 1 &1\\
 & $\{2,4,6\}$, $\{3,4,5\}$, $\{3,4,7\}$, $\{3,5,6\}$, & & \\
 & $\{3,5,7\}$, $\{4,5,6\}$, $\{4,5,7\}$, & &\\
 \B & $\{4,6,7\}$ & & \\
\hline
$2A_2$ \T \B & $\{2,4,5\}$, $\{1,3,5\}$, $\{2,4,7\}$, $\{1,2,5\}$ & 1 & 1\\
\hline
$A_2 + 3A_1$ \T \B & $\{4,6\}$ & $\mathbb{Z}/2\mathbb{Z}$ & $\mathbb{Z}/2\mathbb{Z}$\\
\hline
$A_3$ \T & $\{1,2,3,4\}$, $\{1,2,3,7\}$, $\{1,2,6,7\}$, & 1 & 1\\
 \B & $\{1,3,6,7\}$, $\{2,5,6,7\}$ & &\\
\hline
$(A_3 + A_1)''$ \T \B & $\{1,3,4\}$, $\{1,3,6\}$ & $\mathbb{Z}/2\mathbb{Z}$ & $\mathbb{Z}/2\mathbb{Z}$\\
\hline
$2A_2 + A_1$ \T \B & $\{3,5\}$, $\{4,5\}$, $\{4,7\}$ & 1 & 1\\
\hline
\end{tabular}
\end{center}
\caption{$J$-Sets and Fundamental Groups for $E_7$}
\label{tab:E7data}
\end{table}

\clearpage

\begin{table}[ht]
\begin{center}
\begin{tabular}{c p{2.5in} cc}
\hline
Bala--Carter  \T \B & Subsets $J$ corresponding to $T_{ad}$-orbits & $Z(\mathscr{O})$ & $\pi_1(\mathscr{O})$\\
\hline\hline
$(A_3 + A_1)'$ \T & $\{1,5,6\}$, $\{1,5,7\}$, $\{2,3,4\}$, $\{2,3,7\}$, & 1 & 1\\
  & $\{2,5,6\}$, $\{2,5,7\}$, $\{3,6,7\}$, $\{1,2,4\}$, & & \\
 \B & $\{1,2,6\}$ & & \\
\hline
$A_3 + 2A_1$ \T \B & $\{1,4\}$, $\{3,4\}$, $\{3,6\}$ & $\mathbb{Z}/2\mathbb{Z}$ & $\mathbb{Z}/2\mathbb{Z}$\\
\hline
$D_4$ \T \B & $\{1,6,7\}$ & 1 & 1\\
\hline
$A_3 + A_2$ \T \B & $\{1,5\}$, $\{2,4\}$, $\{2,5\}$ & 1 & $S_2$\\
\hline
$A_3 + A_2 + A_1$ \T \B & $\{4\}$ & $\mathbb{Z}/2\mathbb{Z}$ & $\mathbb{Z}/2\mathbb{Z}$\\
\hline
$A_4$ \T & $\{1,2,7\}$, $\{1,3,7\}$, $\{2,6,7\}$, $\{5,6,7\}$, & 1 & $S_2$\\
\B & $\{1,2,3\}$ & & \\
\hline
$(A_5)''$ \T \B & $\{1,3\}$ & $\mathbb{Z}/2\mathbb{Z}$ & $\mathbb{Z}/2\mathbb{Z}$\\
\hline
$D_4 + A_1$ \T \B & $\{1,6\}$ & $\mathbb{Z}/2\mathbb{Z}$ & $\mathbb{Z}/2\mathbb{Z}$\\
\hline
$A_4 + A_1$ \T \B & $\{2,3\}$, $\{2,6\}$, $\{3,7\}$, $\{5,6\}$, $\{5,7\}$ & 1 & $S_2$\\
\hline
$A_4 + A_2$ \T \B & $\{5\}$ & 1 & 1\\
\hline
$(A_5)'$ \T \B & $\{1,2\}$, $\{2,7\}$ & 1 & 1\\
\hline
$A_5 + A_1$ \T \B & $\{3\}$ & $\mathbb{Z}/2\mathbb{Z}$ & $\mathbb{Z}/2\mathbb{Z}$\\
\hline
$D_5$ \T \B & $\{1,7\}$, $\{6,7\}$ & 1 & 1\\
\hline
$A_6$ \T \B & $\{2\}$ & 1 & 1\\
\hline
$D_5 + A_1$ \T \B & $\{6\}$ & $\mathbb{Z}/2\mathbb{Z}$ & $\mathbb{Z}/2\mathbb{Z}$\\
\hline
$D_6$ \T \B & $\{1\}$ & $\mathbb{Z}/2\mathbb{Z}$ & $\mathbb{Z}/2\mathbb{Z}$\\
\hline
$E_6$ \T \B & $\{7\}$ & 1 & 1\\
\hline
$E_7$ \T \B & $\varnothing$ & $\mathbb{Z}/2\mathbb{Z}$ & $\mathbb{Z}/2\mathbb{Z}$\\
\hline\hline
\end{tabular}
\end{center}
\caption{$J$-Sets and Fundamental Groups for $E_7$}
\label{tab:E7data2}
\end{table}

\section{Acknowledgments} 

The author thanks William Graham for the use of his unpublished results and for his wealth of insight and advice on this project.  This project was completed as the author's dissertation, and thus special thanks and an abundance of gratitude are also extended to her advisor, Pramod Achar.  The author also thanks Martha Precup for her especially helpful suggestions during the editing process.

\bibliographystyle{jabrefpna}	
\bibliography{library}

\end{document}